\newcommand{\bt}{\begin{theorem}}
\newcommand{\et}{\end{theorem}}
\newcommand{\bi}{\begin{itemize}}
\newcommand{\ei}{\end{itemize}}
\newcommand{\bea}{\begin{eqnarray}}
\newcommand{\ba}{\begin{array}}
\newcommand{\eea}{\end{eqnarray}}
\newcommand{\ea}{\end{array}}
\newcommand{\be}{\begin{equation}}
\newcommand{\ee}{\end{equation}}
\newcommand{\what}{\widehat}%
\newcommand{\wtilde}{\widetilde}%
\newcommand{\R}{\mathbb R}%
\newcommand{\B}{\mathbb B}%
\newcommand{\C}{\mathbb C}%
\newcommand{\T}{\mathbb T}%
\newcommand{\Q}{\mathbb Q}%
\newcommand{\N}{\mathbb N}%
\newcommand{\hc}{\mathrm c}
\newtheorem{theorem}{Theorem}[section]
\newtheorem{lemma}[theorem]{Lemma}
\newtheorem{proposition}[theorem]{Proposition}
\newtheorem{corollary}[theorem]{Corollary}
\theoremstyle{definition}
\theoremstyle{definition}
\newtheorem{remark}[theorem]{Remark}
\numberwithin{equation}{subsection}
\numberwithin{theorem}{subsection}
\begin{document}
\baselineskip18pt
\author[S. K. Ray]{Swagato K. Ray }
\address[S. K. Ray]{Stat-Math Unit, Indian Statistical
Institute, 203 B. T. Rd., Calcutta 700108, India}
\email{swagato@isical.ac.in}

\author[R. P. Sarkar]{Rudra P. Sarkar}
\address[R. P. Sarkar]{Stat-Math Unit, Indian Statistical
Institute, 203 B. T. Rd., Calcutta 700108, India}
\email{rudra@isical.ac.in}

\subjclass[2000]{Primary 43A85; Secondary 22D05} \keywords{locally compact groups, homogeneous spaces}

\title[Chaotic  multipliers]{Chaotic behaviour of the Fourier multipliers on Riemannian symmetric spaces of noncompact type}
\subjclass[2010]{Primary 43A85; Secondary 22E30}
\keywords{Riemannian symmetric space; Fourier  multiplier; Hypercyclic vectors;  Chaos}
\maketitle
\begin{abstract} Let $X$ be a Riemannian symmetric space of noncompact type and $T$ be a  linear translation-invariant operator which is bounded on $L^p(X)$. We shall show that if $T$ is not a constant multiple of identity then there exist complex constants $z$ such that $zT$ is chaotic on $L^p(X)$ when $p$ is in  the  sharp range $2<p<\infty$.  This vastly generalizes the result that dynamics of the (perturbed) heat semigroup is chaotic on  $X$ proved in \cite{J-W, Pram-Sar}.  \end{abstract}

\section{Introduction} Let $T:L^p(X)\to L^p(X)$ be an $L^p$-multiplier on a Riemannian symmetric space $X$ of noncompact type for any fixed $1\le p\le \infty$. We shall call $T$ nontrivial if it is not a constant multiple of identity.  The aim of this note is  to show that for the range $2<p<\infty$, given any  nontrivial $L^p$-multiplier $T$, we can find  complex constants $z\in \C$ such that the operator $zT$  is chaotic on $L^p(X)$.  This range of $p$ will be shown to be sharp. Our definition of chaos is consistent with \cite{DSW, J-W}, which in turn is an adaptation of the one introduced by Devaney \cite{Devaney}.

To put the result in perspective, let us discuss the background. Let $\Delta$ be the positive Laplace-Beltrami operator on $X$ and $T_t=e^{-t\Delta}, t\ge 0$ be the heat semigroup. In \cite{J-W} Ji and Weber had shown that its  perturbation  $e^{ct}T_t, t\ge 0$ for some constant $c$ is subspace chaotic on $L^p(X)$  when $2<p<\infty$. Through \cite{Sar-NA-chaos}  and \cite{Pram-Sar} this result was improved by establishing that the same perturbation of the heat semigroup is actually  chaotic on $L^p(X)$ with $p$ in the same range. In this paper we shall establish that this is a particular case of a  general fact. We first note  that the operator $T_t=e^{-t\Delta}$ is the same as the operator $f\mapsto f\ast h_t$ where $h_t$ is the heat-kernel, i.e. the fundamental solution of the heat equation $(\Delta-\frac{\partial}{\partial t})f=0$. Thus it is natural to consider  the operator $f\mapsto f\ast \mu$ where $\mu$ is any nonatomic $K$-biinvariant Borel measure, a particular case of which is the heat operator. We show that such an operator is always chaotic on $L^p(X), 2<p<\infty$ provided it is not a contraction (Corollary \ref{result-measure}). Indeed the heat operator can be substituted by any $L^p$-multiplier. A corollary of the main result (Theorem \ref{result-multiplier}) in this paper is the following. Consider an {\em autonomous discretization} (see section 2.2) of the heat semigroup $T_{t_0}=e^{-t_0\Delta}$  for any fixed $t_0>0$. Then   there exists constant $z\in \C$, such that   $T=ze^{-t_0\Delta }$  is chaotic on $L^p(X)$ when $p$ is in the range $2<p<\infty$.
 It is known that a $C_0$-semigroup is hypercyclic if and only if it admits a hypercyclic discretization. We also note that a periodic point of a discretizatized semigroup is also a periodic point of the original semigroup. Thus if $T=zT_{t_0}$ is chaotic then so is the semigroup $(zT_t)_{t\ge 0}$. Thus   our result in the present article accommodates the earlier results in this direction mentioned above. See Section 2 and Section 5 for more details.

The paper is organized as follows.
The general preliminaries are established in Section 2, while that about Riemannian symmetric spaces are given in Section 3. Section 4 contains the main result and its proof. In Section 5 we deal with some well-known multipliers and obtain some corollaries of the main results for the particular cases.  In Section 6  we show the sharpness of the range of $p$ in the main result. Finally in Section 7 we state some open questions along with motivations.

\section{Preliminaries} \label{prelim}
In this section we shall establish notation and gather all the definitions and results required for this article.
\subsection{Generalities} \label{generalities}
The letters $\mathbb R$, $\mathbb Q$ and $\mathbb C$ denote respectively the set of real numbers, rational numbers and complex numbers.
 We use $\Re z$ and $\Im z$ to denote respectively the real and imaginary parts of $z\in \C$. This notation will also be used for its obvious generalization when $\mathbf z \in \mathbb C^n$.
 The notation $| \cdot |$ will denote the standard Euclidean norm in $\mathbb R^n$ and in $\mathbb C^n$:
$|\mathbf x| = \sqrt{x_1^2 + \cdots + x_n^2}$ for  $\mathbf x = (x_1, \cdots, x_n) \in \mathbb R^n$ and   $|\mathbf z| = \sqrt{|\Re(\mathbf z)|^2 + |\Im(\mathbf z)|^2}$ for $\mathbf z \in \mathbb C^n$.
 We will also use $|\cdot|$ to represent a norm on certain spaces related to the symmetric space $X$, but under appropriate identifications of these spaces with $\mathbb R^n$ or $\mathbb C^n$, which will make this  consistent with the previous usage of this notation. (For more details see  Section \ref{RSS-background}.)
 For a set $S$ in a topological space, $S^{\circ}$ denotes its interior.
 For a function $f$ on $X$, $\|f\|_p$ denotes its $L^p$ norm. We shall mention explicitly when we will use the $L^p$-norm of functions on spaces other than $X$.
 For any $p\in (1, \infty)$, $p'=p/(p-1)$ and for $p=1$, $p'=\infty$. When $p=\infty$ we use $p'$ to mean $1$.
We shall frequently use the  notation
$\gamma_p = \gamma_{p'} = \Bigl|\frac{2}{p}-1\Bigr|$  for any $p \in (1, \infty)$ and   $\gamma_1 = \gamma_{\infty} = 1$. The letters $C, c$ will be used to denote  positive constants whose values may change from one line to another.
Following results of several complex variable will be used.
\subsubsection{Open mapping theorem} (\cite[Theorem 1.21, p. 17]{Range})  If $\Omega\subset\C^n$ is open and $f:\Omega\rightarrow\C$ is a nonconstant holomorphic function then $f(U)$ is open for every open set $U\subset\Omega$.

\subsubsection{Maximum modulus principle} (\cite[Corollary 1.22, p. 17]{Range}) Let $\Omega\subset\C^n$ be an open set and $f:\Omega\rightarrow\C$ is a holomorphic function. If $|f|$ attains a local maximum at a point $z_0\in \Omega$ then $f$ is constant in the connected component of $\Omega$ containing $z_0$.

\subsubsection{Thin sets} Let $\Omega\subset\C^n$ be an open set. A subset $E$ of $\Omega$ is called  {\em thin} if for every point $x_0\in \Omega$ there is a ball $B(x_0, r)$ centered at $x_0$ with radius $r>0$ in $\Omega$ and a nonconstant holomorphic function $f: B(x_0, r)\to \C$ such that $f(z)=0$ for $z\in E\cap B(x_0, r)$.  We quote here some well known results related to thin sets (\cite[pp. 32--33]{Range}):
  \begin{enumerate}
  \item If $E\subset \Omega$ is not thin then no nonzero holomorphic function $f:\Omega\to \C$ can vanish on $E$.
  \item If $E\subset \Omega$ is thin then its closure $\bar{E}$ in $\Omega$ is also thin and $E$ is nowhere dense.
  \item  The $2n$-dimensional Lebesgue measure of  a thin set $E\subset \Omega\subset \C^n$ is zero.
  \item If $\Omega$ is connected and $E\subset \Omega$ is thin then $\Omega\smallsetminus E$ is also connected.
    \end{enumerate}

\subsection{Chaos, Hypercyclicity etc.}  Let $\B$ be  a separable Fr\'echet space and  $T:\B\to \B$ be a linear dynamical system, i.e. $T$ is a linear map from  $\B$ to itself. For $x\in \B$ we call \[\{x, Tx, T^2x, \ldots, \}\] the orbit of $x$ under $T$. The operator $T$ is called hypercyclic if there is an $x\in \B$, such that the orbit  of $x$ under $T$ is dense in $\B$. In such a case $x$ is called a hypercyclic vector for $T$. (See \cite[p. 37]{GKPA}.)
A point $x\in \B$ is called a {\em periodic point} of $T$ if there is a nonzero natural number $n$ such that $T^nx=x$.
The operator $T$ is called chaotic if $T$ is hypercyclic  and the set of all its  periodic points is dense in $\B$.

For a $C_0$-semigroup $(T_t)_{t\ge 0}$   on a Fr\'echet space $\B$, and $x\in \B$, $\{T_t x \mid t\ge 0\}$ is called the orbit of $x$ under $(T_t)_{t\ge 0}$. If this orbit is dense in $\B$, then $x$ is called a hypercyclic vector and we say that $(T_t)_{t\ge 0}$ is hypercyclic on $\B$. A point $x\in \B$ is called a periodic point of $(T_t)_{t\ge 0}$ if $T_tx=x$  for some $t>0$. The semigroup $(T_t)_{t\ge 0}$ is called chaotic
 if it is hypercyclic  and the set of all its  periodic points is dense in $\B$.

 A {\em discretization} of a $C_0$-semigroup $(T_t)_{t\ge 0}$ is a sequence of operators $(T_{t_n})_n$ with $t_n\to \infty$. In particular if $t_n=nt_0$ for some $t_0>0$ and $n\in \N$, then
 $(T_{t_n})_n=(T^n_{t_0})_n$ is an {\em autonomus dscretization} of $(T_t)_{t\ge 0}$. It is clear that any periodic point (respectively hypercyclic vector) of the operator $T_{t_0}$ for any fixed $t_0>0$ is also  a periodic point (respectively hypercyclic vector) of the semigroup $(T_t)_{t\ge 0}$. Thus if for some $t_0>0$, $T_{t_0}$ is chaotic on a Banach space $\B$,  then $(T_t)_{t\ge 0}$ is chaotic on $\B$.  We also have the following result (see \cite[p. 168, Theorem 6.8]{GKPA}):
 \begin{proposition} Let $(T_t)_{t\ge 0}$ be a $C_0$-semigroup on a Banach space $\B$. If $x\in \B$ is a hypercyclic vector for $(T_t)_{t\ge 0}$ then it is a hypercyclic vector for each operator $T_t$, $t>0$.
 \end{proposition}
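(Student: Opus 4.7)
The plan is to fix $t_0>0$ and a vector $x$ hypercyclic for the semigroup, and to show that the discrete orbit $\{T_{nt_0}x:n\in\N\}$ is dense in $\B$. The approach is to combine the strong continuity of the semigroup with a Baire-category argument in the spirit of Conejero, M\"uller and Peris.

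I would first establish an auxiliary continuity lemma: for every nonempty open $U\subset\B$ and every $y\in U$, there exist $\delta>0$ and an open neighborhood $V\subset U$ of $y$ such that $T_sV\subset U$ for all $s\in[0,\delta]$. This follows from the estimate
\[
\|T_sz-y\|\le\|T_s\|\,\|z-y\|+\|T_sy-y\|,
\]
together with strong continuity of $(T_s)$ at $0$ and the local boundedness $\sup_{s\in[0,t_0]}\|T_s\|<\infty$, which is automatic for a $C_0$-semigroup on a Banach space via the uniform boundedness principle.

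With this lemma in hand I would exploit density of the continuous orbit: given any nonempty open $U$, choose $y\in U$ and obtain $V$ and $\delta$ as above; density of $\{T_tx:t\ge 0\}$ furnishes arbitrarily large $\tau$ with $T_\tau x\in V$, so that the whole interval $\{T_{\tau+s}x:s\in[0,\delta]\}$ lies in $U$. If $\delta\ge t_0$ this interval already contains some multiple $nt_0$ and we are done. The main obstacle is the generic case $\delta<t_0$: the hitting-time set $\{t\ge 0:T_tx\in U\}$ contains arbitrarily long intervals of length $\delta$, but a priori no such interval need meet the discrete grid $t_0\N$.

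To overcome this obstacle I would invoke the Baire-category argument of Conejero--M\"uller--Peris. Fix a countable base $(U_k)$ of the topology of $\B$; for each $k$ the set $A_k:=\bigcup_{n\ge 1}T_{nt_0}^{-1}(U_k)$ of vectors whose discrete orbit visits $U_k$ is open, and one verifies via the continuity lemma together with the semigroup identity $T_{nt_0+s}=T_sT_{nt_0}$ that $A_k$ is in fact dense in $\B$. A recursive construction along the continuous orbit of $x$, using a shrinking sequence of neighborhoods around successive points on the orbit and progressively synchronizing the corresponding hitting intervals with the grid $t_0\N$, then shows that $x$ itself belongs to $\bigcap_k A_k$. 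Hence the discrete orbit of $x$ meets every $U_k$ and is dense in $\B$, which gives the proposition.
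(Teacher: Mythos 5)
There is a genuine gap, and it sits exactly at the point you flag as ``the main obstacle''. First, your claim that each $A_k=\bigcup_{n\ge 1}T_{nt_0}^{-1}(U_k)$ is dense is not justified and is in fact circular: by Birkhoff's transitivity theorem, density of all the $A_k$ is \emph{equivalent} to hypercyclicity (topological transitivity) of the single operator $T_{t_0}$, which is essentially what the proposition asks you to prove. The continuity lemma together with $T_{nt_0+s}=T_sT_{nt_0}$ does not deliver it: from a continuous-orbit hit $T_\tau z\in U_k$ you can only conclude $T_{nt_0}(T_rz)\in U_k$ for a suitable shift $r\in[0,t_0)$, and the shifted vector $T_rz$ need not be anywhere near the point $w$ you are trying to approximate -- the same synchronization problem with the grid $t_0\N$ reappears verbatim. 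Second, even if one granted that $\bigcap_k A_k$ is a dense $G_\delta$, a Baire-category argument only produces a residual set of $T_{t_0}$-hypercyclic vectors; it says nothing about the \emph{given} vector $x$, whereas the proposition asserts that every hypercyclic vector of the semigroup is hypercyclic for $T_{t_0}$. Your final ``recursive construction \ldots progressively synchronizing the corresponding hitting intervals with the grid $t_0\N$'' is a restatement of the desired conclusion rather than an argument; no mechanism for achieving the synchronization is offered, and that mechanism is the whole content of the theorem.

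For comparison: the paper does not prove this proposition at all; it quotes it from Grosse-Erdmann and Peris Manguillot (Theorem 6.8 there), i.e.\ the Conejero--M\"uller--Peris theorem. Its known proof is \emph{not} a Baire-category argument but a connectedness argument in the spirit of Ansari's and the Le\'on--M\"uller theorems: roughly, one attaches to the orbit closure a parameter in $[0,t_0]$ (equivalently in the circle $\R/t_0\Z$), shows the relevant sets of parameters are closed, cover the circle, and are forced by connectedness to coincide, which is what lets one move hitting times onto the grid. If you want to salvage your write-up, you should either reproduce that connectedness argument or cite the result; the category framework you set up cannot, by itself, reach the stated conclusion about the specific vector $x$.
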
 For a detailed account on the relationship between dynamics of a $C_0$-semigroup and that of its discretization we refer to \cite[Chap. 7]{GKPA}. This discussion in particular points out that  Theorem \ref{result-multiplier} in this paper accommodates the chaoticity of the heat semigroup considered in \cite{J-W, Sar-NA-chaos, Pram-Sar} as a special case, which was alluded to in the introduction.

The following result due to Kitai   will be used in this article (\cite{Kitai}, \cite[p. 71]{GKPA}).
\begin{theorem}[Kitai]\label{kitai} Let $\B$ be a separable Banach space and $T$ be a bounded linear operator from $\B$ to itself. Let $Y_1, Y_2$ be two dense subsets of $\B$ and $T':Y_1\to Y_1$ be a  (not necessarily linear or continuous) map. If
\[(i)\ \  \lim_{n\to \infty}T^n y =0 \ \ \forall y\in Y_2, (ii) \ \ \lim_{n\to \infty} T'^nx=0 \ \ \forall x\in Y_1, \text{ and } (iii)\ \  TT'x=x\ \ \forall x\in Y_1\] then $T$ is  hypercyclic on $\B$., i.e there is an $x\in \B$ such that the orbit $\{T^nx\mid n\in \N\}$ is dense in $\B$.
\end{theorem}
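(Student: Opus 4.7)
The plan is to invoke the Baire category theorem in the separable Banach space $\B$. Fix a countable base $\{V_k\}_{k\in\N}$ of nonempty open sets for the topology of $\B$. The set of hypercyclic vectors for $T$ is exactly
\[
H(T)=\bigcap_{k\in\N}\bigcup_{n\in\N}T^{-n}(V_k).
\]
Since $T$ is continuous, each $\bigcup_n T^{-n}(V_k)$ is open, so it is enough to prove that each of these sets is dense. Once that is done, Baire's theorem yields $H(T)\neq\emptyset$ (in fact residual), which is the required conclusion.

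To prove density, fix an arbitrary nonempty open set $U\subset\B$ and an index $k$. Using the density of $Y_2$ pick $u\in U\cap Y_2$, and using the density of $Y_1$ pick $v\in V_k\cap Y_1$. Define the approximating sequence
\[
y_n=u+T'^{n}v,\qquad n\in\N.
\]
Hypothesis (ii) applied to $v\in Y_1$ gives $T'^{n}v\to 0$, so $y_n\to u$ and hence $y_n\in U$ for all large $n$. On the other hand, expanding linearly,
\[
T^{n}y_n=T^{n}u+T^{n}T'^{n}v.
\]
A short induction based on hypothesis (iii), together with the fact that $T':Y_1\to Y_1$, shows that $T^{n}T'^{n}v=v$ for every $n\geq 1$ and every $v\in Y_1$ (at each step one uses $T(T'w)=w$ with $w=T'^{n-1}v\in Y_1$). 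Combining this with hypothesis (i), which gives $T^{n}u\to 0$, we obtain $T^{n}y_n\to v\in V_k$, so $T^{n}y_n\in V_k$ for all large $n$. For any such $n$, $y_n\in U\cap T^{-n}(V_k)$, proving that $\bigcup_n T^{-n}(V_k)$ meets $U$.

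Since $U$ and $k$ were arbitrary, each set $\bigcup_n T^{-n}(V_k)$ is dense and open, so by the Baire category theorem $H(T)$ is a dense $G_\delta$ in $\B$; in particular $H(T)\neq\emptyset$, giving the desired hypercyclic vector. The only nontrivial verification is the algebraic identity $T^{n}T'^{n}v=v$, which rests on the invariance $T'(Y_1)\subset Y_1$ built into the hypothesis; without this the telescoping collapses. No structure beyond separability, completeness, and the continuity of $T$ is needed, which is why the criterion applies uniformly in the settings used later in the paper.
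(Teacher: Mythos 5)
Your proof is correct: the Baire category argument, the choice $y_n=u+T'^{n}v$, and the induction giving $T^{n}T'^{n}v=v$ (which uses only $T'(Y_1)\subset Y_1$ and hypothesis (iii)) are all sound, and the simultaneous membership $y_n\in U$, $T^ny_n\in V_k$ for large $n$ is handled properly. The paper itself does not prove this theorem but cites Kitai and Grosse-Erdmann--Peris, and your argument is essentially the standard Birkhoff-transitivity/Baire proof given in those references, so there is nothing to add.
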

We conclude this section noting that  (\cite[p. 167, Theorem 6.7]{GKPA}) if $T$ is a hypercyclic operator on a Fr\'echet space $\B$ and $\lambda\in \C$ with $|\lambda|=1$ then $T$ and $\lambda T$ have the same set of hypercyclic vectors.

\section{Riemannian symmetric spaces} \label{RSS-background}
Most of the notation and results in this section are standard and available  for instance in  \cite{GV, Helga3}. For convenience and the sake of keeping the current exposition self-contained, we merely collect the relevant facts without proofs but indicate appropriate references.

\subsection{Basics} \label{def-rho}
Throughout this paper, $X$ will denote a Riemannian symmetric space of noncompact type which  can be realized as a quotient space $G/K$ where $G$ is a connected noncompact semisimple Lie group with finite centre, and $K$ is a maximal compact subgroup of $G$.  The group $G$ acts naturally on $X$ and on functions on $X$  by left translations. Functions on $X$ are identified with the right $K$-invariant functions on $G$ and vice versa.  For an element $x\in G$ and a function $f$ on $X$, $\ell_x f$ is the left translation of $f$ defined by $\ell_x f(y)=f(x^{-1}y)$. A function (or measure) on $X$ is called  $K$-invariant, if it is invariant under left $K$-action. Such a function (respectively  measure) can be identified naturally with a $K$-biinvariant function (respectively  measure) on $G$. Frequently we shall use this identification without mentioning.

 The group $G$ admits an Iwasawa decomposition, namely $G = KAN$, inducing a direct sum decomposition of the  Lie algebra: $\mathfrak g = \mathfrak k \oplus \mathfrak a \oplus \mathfrak n$. Here $\mathfrak g, \mathfrak k, \mathfrak a$ and $\mathfrak n$ denote the Lie algebras of $G, K, A$ and $N$ respectively. This decomposition fixes a system of positive roots $\Sigma^{+} \subset \mathfrak a^{\ast}$, where $\mathfrak a^{\ast}$ denotes the real dual of $\mathfrak a$. From the collection  of root-spaces  $\mathfrak g_{\alpha}$, parametrized by $\Sigma^{+}$  one obtains \[ \mathfrak n = \bigoplus_{\alpha \in \Sigma^{+} }\mathfrak g_{\alpha}. \]   Setting $m_{\alpha} = \text{dim}(\mathfrak g_{\alpha})$, the multiplicity of the root $\alpha \in \Sigma^{+}$, we define  ${\pmb{\rho}}$  as the half-sum of the elements of $\Sigma^{+}$ counted with multiplicities:
\begin{equation} \label{rho}
{\pmb{\rho}} = \frac{1}{2} \sum_{\alpha \in \Sigma^{+}} m_{\alpha} \cdot \alpha \in \mathfrak a^{\ast}.
\end{equation}
The Killing form on $\mathfrak g$ restricts to a positive definite form on $\mathfrak a$, which in turn induces a positive inner product and hence a norm $|\cdot|$ on $\mathfrak a^{\ast}$, so $|{\pmb{\rho}}|$ is defined. The Killing form endows $X$ with both a natural Riemannian metric and a corresponding $G$-invariant measure (denoted  by $dx$). The positive Laplace-Beltrami operator corresponding to this Riemannian metric is denoted by $\Delta$.

Let $\dim(\mathfrak a) = n$, which is by definition the rank of the space $X$. Using the pull-back of the Killing form, we will henceforth identify $\mathfrak a$ and $\mathfrak a^{\ast}$ with $\mathbb R^n$, equipped with the standard inner product $\langle \cdot, \cdot \rangle$:
\begin{equation} \label{ip}\langle \mathbf x, \mathbf y \rangle = \sum_{i=1}^{n} x_i y_i, \quad \mathbf x = (x_1, \cdots, x_n), \; \mathbf y = (y_1, \cdots, y_n) \in \mathbb R^n, \end{equation}
so that $\langle \mathbf x, \mathbf x \rangle > 0$ for all nonzero $\mathbf x \in \mathbb R^n$.  The complexification of $\mathfrak a^{\ast}$ will be denoted by $\mathfrak a_{\mathbb C}^{\ast}$ and will be naturally identified with $\mathbb C^n$. The real inner product \eqref{ip} extends to $\mathbb C^n$ as a $\mathbb C$-bilinear form $\mathbb C^n \times \mathbb C^n \rightarrow \mathbb C$ defined by
\begin{equation} \label{def-L}
 (\mathbf z, \mathbf v) = \sum_{i=1}^{n} z_i v_i, \quad  \text{ where } \mathbf z = (z_1, \cdots, z_n), \, \mathbf v=(v_1, \cdots, v_n) \in \mathbb C^n.
\end{equation} For the action of  $\lambda\in \mathfrak a_{\mathbb C}^{\ast}$ on $v\in \mathfrak a$ we shall use both the notation $\lambda(v)$ and $(\lambda, v)$.

Let $W$ denote the  Weyl group  of the pair $(\mathfrak g, \mathfrak a)$ and  $\mathfrak a_+$ and $\mathfrak a^\ast_+$ be the positive Weyl chambers corresponding to $\Sigma^+$ in $\mathfrak a$ and $\mathfrak a^\ast$ respectively.

 For $p \geq 1$, we define  the set \cite[p.328]{GV}:
\begin{equation} \label{Lambdap}
\Lambda_p = \left\{ \lambda  \in \mathbb C^n \mid |\Im (w\lambda)(H)|\le \gamma_p\, \pmb\rho(H)\text{ for all } H\in \mathfrak a_+, w\in W \right\},
\end{equation}
where $\gamma_p$ and ${\pmb{\rho}}$ have been defined in \ref{generalities} and \eqref{rho} respectively. We note:
 \begin{enumerate}
 \item[(a)] If $p=2$ then $\Lambda_p$ reduces to $\mathfrak a^{\ast}$, which is identified with $\R^n$.
 \item[(b)] For $1\le p<q\le 2$, $\Lambda_q\subsetneq \Lambda_p$.
 \item[(c)] $\Lambda_p=\Lambda_{p'}$ for $p\ge 1$.
 \item[(d)] $\Lambda_p$ is closed under the reflection $\lambda\mapsto -\lambda$ (\cite[p. 329]{GV}).
\end{enumerate}

We recall from subsection \ref{def-rho} that the $G$-invariant measure $dx$ on $X$ is induced by the Killing form. On $G$, we fix the Haar measure $dg$ that satisfies
\[ \int_X f(x) \, dx = \int_G f(g) \, dg\]
for every function $f\in L^1(X)$ which is identified as  a right $K$-invariant function on $G$ in the right hand side. Let $M$ be the centralizer of $A$ in $K$. On $K$ we fix the normalized Haar measure $dk$ and on $K/M$ we fix the $K$-invariant normalized measure. We shall often slur over the difference between the two.

\subsection{Spherical Fourier Transform}
Let $H : G \rightarrow \mathfrak a$ be the Iwasawa projection associated to the decomposition $G = KAN$. The {\em{elementary spherical function}}  $\varphi_\lambda$ for $\lambda\in \mathfrak a^\ast_\mathbb C$ is defined by (\cite[p. 200]{Helga3}):
\[\varphi_\lambda(x)=\int_K e^{- (i\lambda+{\pmb{\rho}}) (H(x^{-1}k))}dk, \quad x\in G.\]
We record a few well-known facts about these functions. Some are easy to deduce. For the others, see  \cite[pp. 419, 427, 460]{Helga2} and \cite[Proposition 3.1.4., Proposition 3.2.2., (4.6.3), (4.6.4), (4.6.9)]{GV}.
\begin{lemma}  \label{lemma-eigenfns}
The elementary spherical functions $\varphi_\lambda$ have the following properties:
\begin{enumerate}
\item[(a)] For each $\lambda\in \mathfrak a^\ast_\C\equiv \C^n$, the function $\varphi_\lambda$ is a $K$-biinvariant function on $G$
(hence is naturally identified as a function on $X$) and  $\int_{K} \varphi(xky)dk=\varphi(x)\varphi(y)$.
\item[(b)]  $\varphi_\lambda = \varphi_{w \lambda}$ for all $w \in W$.
\item[(c)] $\varphi_{-\lambda} (x^{-1}) = \varphi_\lambda(x)$ for all $x \in G$ and $\lambda \in \mathfrak a_{\mathbb C}^{\ast}$.
\item[(d)] For every $\lambda$, the identity \[ \Delta \varphi_\lambda = ((\lambda, \lambda) + |{\pmb{\rho}}|^2) \varphi_\lambda \]
holds pointwise.
\item[(e)] If $2 < p < \infty$ and $\lambda \in \Lambda_p^{\circ}$ then $\varphi_\lambda \in L^p(X)$ and for $\lambda\in \mathfrak a^\ast$, $\varphi_\lambda\in L^{2+\epsilon}(X)$ for any $\epsilon>0$. \label{eigenfns-a}
\item[(f)] If $\lambda\in \Lambda_1$, then $\varphi_\lambda \in L^\infty(X)$. \label{eigenfns-b}
\item[(g)] If $\lambda \in \Lambda_1^{\circ}$, then $\varphi_\lambda \in C_0(X)$, the space of continuous functions vanishing at infinity. \label{eigenfns-c}
\item[ (h)] For each fixed $x\in G$, $\lambda\mapsto \varphi_\lambda (x)$ is a holomorphic function on $\C^n$.
\end{enumerate}
\end{lemma}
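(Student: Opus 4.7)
The plan is to handle the eight items separately, using the integral representation $\varphi_\lambda(x)=\int_K e^{-(i\lambda+\pmb{\rho})(H(x^{-1}k))}\,dk$. Items (a)--(d) and (h) are formal consequences of this representation, while (e)--(g) need Harish-Chandra's asymptotic expansion on $A_+$. For (a), left-$K$-invariance follows from the substitution $k\mapsto k_0^{-1}k$ under the translation-invariant probability measure $dk$, and right-$K$-invariance is immediate from right-$K$-invariance of $H$; the product formula $\int_K\varphi_\lambda(xky)\,dk=\varphi_\lambda(x)\varphi_\lambda(y)$ is Godement's characterization of zonal spherical functions, provable by a Fubini computation on $K\times K$ after decomposing $H(xky)$ via the Iwasawa projection. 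For (b), use the equivalent form $\varphi_\lambda(x)=\int_{K/M}e^{(i\lambda-\pmb{\rho})H(xk)}\,dkM$ together with the intertwining operator between principal series with parameters $\lambda$ and $w\lambda$. For (c), substitute $x\mapsto x^{-1}$ in the integral and invoke the Cartan-involution symmetry (modulo a change of variable on $K$). For (d), each slice $x\mapsto e^{-(i\lambda+\pmb{\rho})H(x^{-1}k)}$ is an eigenfunction of $\Delta$ with eigenvalue $(\lambda,\lambda)+|\pmb{\rho}|^2$ by a direct Iwasawa-coordinate computation, and $\Delta$ commutes with $K$-integration. For (h), the integrand is entire in $\lambda\in\C^n$ with bounds uniform on compact $\lambda$-sets (using compactness of $K$), so Morera plus Fubini give holomorphy on $\C^n$.

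The substance lies in (e)--(g), where the tool is Harish-Chandra's asymptotic
\begin{equation*}
\varphi_\lambda(\exp H)\;\sim\;\sum_{w\in W}c(w\lambda)\,e^{(iw\lambda-\pmb{\rho})(H)}\qquad\text{as }H\to\infty\text{ in }\mathfrak a_+,
\end{equation*}
combined with the polar-decomposition formula $\|\varphi_\lambda\|_p^p=c\int_{\mathfrak a_+}|\varphi_\lambda(\exp H)|^pJ(H)\,dH$, where the Jacobian satisfies $J(H)\asymp e^{2\pmb{\rho}(H)}$ at infinity. Inserting the asymptotic, $L^p$-integrability on the $A_+$-side reduces to the exponential inequality $p\,(\Im w\lambda)(H)-(p-2)\pmb{\rho}(H)<0$ for every $w\in W$ and every nonzero $H\in\mathfrak a_+$, which rearranges to $|(\Im w\lambda)(H)|<\gamma_p\,\pmb{\rho}(H)$; this is precisely $\lambda\in\Lambda_p^\circ$. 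The $L^{2+\epsilon}$-statement for real $\lambda$ is the special case $\mathfrak a^{\ast}\subset\Lambda_{2+\epsilon}^\circ$. Parts (f) and (g) fall out from the same asymptotic: for $\lambda\in\Lambda_1$ the leading exponent $(\Im w\lambda-\pmb{\rho})(H)$ is nonpositive on $\mathfrak a_+$, giving boundedness, while for $\lambda\in\Lambda_1^\circ$ it is strictly negative off the origin, forcing decay to zero at infinity; continuity of $\varphi_\lambda$ comes directly from the integral definition, so $\varphi_\lambda\in C_0(X)$ in case (g).

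The only genuine obstacle is the sharpness direction in (e): showing that $\varphi_\lambda\notin L^p$ as soon as $\lambda$ reaches the boundary $\partial\Lambda_p$ requires nontrivial control of the \emph{full} Harish-Chandra expansion to rule out cancellations between different $w$-terms, not just the leading-term estimate. Everything else is either algebraic manipulation or a one-term asymptotic argument. Since the authors are recording well-known material, in a self-contained writeup one would simply appeal to the standard references \cite{GV, Helga2, Helga3} for this classical sharp result, as they do.
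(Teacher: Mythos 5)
The paper itself gives no proof of this lemma: the authors record it as well known and simply cite \cite{Helga2} and \cite[Prop.\ 3.1.4, Prop.\ 3.2.2, (4.6.3), (4.6.4), (4.6.9)]{GV}, which is exactly what you say you would do in the end, so your sketch is not in conflict with the paper's treatment and it does follow the standard textbook line for each item. Three small corrections to your sketch. First, in (a) the right-$K$-invariance of $\varphi_\lambda$ comes from the \emph{left}-$K$-invariance of the Iwasawa projection, $H(k_0g)=H(g)$, applied to $H\bigl((xk_0)^{-1}k\bigr)=H(k_0^{-1}x^{-1}k)=H(x^{-1}k)$; the projection $H$ is not right-$K$-invariant, so your attribution is mislabeled even though the conclusion is immediate. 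Second, for (e)--(g) the one-term Harish-Chandra asymptotics $\varphi_\lambda(\exp H)\sim\sum_{w}c(w\lambda)e^{(iw\lambda-\pmb{\rho})(H)}$ is a pointwise statement that is not uniform as $H$ approaches the walls of $\mathfrak a_+$ (the expansion coefficients blow up there), so integrating it against $J(H)\asymp e^{2\pmb{\rho}(H)}$ over all of $\mathfrak a_+$ is not by itself a complete argument; the clean standard route, and the one behind the cited results in \cite{GV}, uses the global estimates $|\varphi_\lambda|\le\varphi_{i\Im\lambda}$, $\varphi_{i\eta}(\exp H)\le e^{\max_{w\in W}(w\eta)(H)}\varphi_0(\exp H)$ and $\varphi_0(\exp H)\le C(1+|H|)^{d}e^{-\pmb{\rho}(H)}$, valid on all of $\overline{\mathfrak a_+}$, which yield (e), (f) and (g) simultaneously. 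Third, your ``only genuine obstacle'' concerning sharpness at $\partial\Lambda_p$ is moot: the lemma claims only the sufficient direction ($\lambda\in\Lambda_p^\circ\Rightarrow\varphi_\lambda\in L^p(X)$, and membership in $\Lambda_1$ or $\Lambda_1^\circ$ for (f), (g)), so no non-membership statement on the boundary needs to be addressed.
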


For a measurable function $f$ of $X$, we define its {\em{spherical Fourier transform}} $\widehat{f}$ as follows (see \cite[p. 425]{Helga2}),
\[\widehat{f}(\lambda) = \int_X f(x) \varphi_{-\lambda} (x) \, dx, \quad \lambda \in \mathfrak a^{\ast}, \]
whenever the integral makes sense.  Since for all $w \in W$, $\varphi_\lambda = \varphi_{w \lambda}$ we have $\what{f}(\lambda)=\what{f}(w\lambda)$. Its inverse transform, again subject to convergence of the defining integral, is given by (see \cite[p. 454]{Helga2})
\begin{equation} \label{InvSpTr}
f(x) = C \int_{\mathfrak a^{\ast}} \widehat{f}(\lambda) \, \varphi_\lambda(x) \, |\hc(\lambda)|^{-2} d\lambda,
\end{equation}
where $\hc(\lambda)$ is the Harish-Chandra $\hc$-function, $d\lambda$ is the Lebesgue measure on $\mathfrak a^\ast$ (and thus $|\hc(\lambda)|^{-2} d\lambda$ is the spherical Plancherel measure on $\mathfrak a^{\ast}$) and $C$ is a normalizing constant.

 \subsection{Helgason Fourier transform} (See \cite[pp. 199-203]{Helga3} for details.) For a function  $f$ on $X$,  its Helgason Fourier transform is defined by \[\wtilde{f}(\xi, k)=\int_X f(x) e^{(i\xi-\pmb\rho)(H(x^{-1}k))} dx\] for all $\xi\in \mathfrak a_\C^\ast \equiv \C^n$,  $k\in K/M$. for which the integral exists. The Fourier transform $f(x)\to \wtilde{f}(\xi, k)$ extends to an isometry of $L^2(X)$ onto $L^2(\mathfrak a^\ast_+\times K/M, |\hc(\xi)|^{-2} d\xi dk)$ where $\hc(\xi)$ is the Harish-Chandra $\hc$-function and thus $|\hc(\xi)|^{-2} d\xi dk$ is the Plancherel measure.  We also have,
 \[\int_X f_1(x) \overline{f_2(x)} dx=\frac 1{|W|} \int_{a^\ast_+\times K/M} \wtilde{f_1}(\xi, k)\overline{\wtilde{f_2}(\xi, k)} |\hc(\xi)|^{-2} d\xi dk,\] where $|W|$ is the cardinality of the Weyl group $W$ and $dk$ is the normalized $K$-invariant measure on $K/M$.
 We note that if  $g$ is a   $K$-invariant function on $X$, then $\wtilde{g}(\xi, k)=\what{g}(\xi)$ for all $k\in K/M$ and for $f, g$ as above,
\[\wtilde{f\ast g}(\xi, k)=\wtilde{f}(\xi, k)\what{g}(\xi)\] for $\xi\in \C^n$ and $k\in K/M$ whenever the quantities $f\ast g, \wtilde{f\ast g}, \wtilde{f}$ and $\what{g}$ make sense. We have the following $L^p$-version of the inversion formula (see \cite[3.3]{St-Tom79}).
\begin{theorem}[Stanton-Tomas] \label{St-Tom79}
For a function $f\in L^p(X), 1 \le p < 2$,
if $f\ast \varphi_\lambda$ is in $L^1(|\hc(\lambda)|^{-2}\, d\lambda)$, then for almost every $x\in X$,
\[ f(x) = \int_{\mathfrak a^\ast} f\ast \varphi_\lambda(x) |\hc(\lambda)|^{-2}\, d\lambda.\]
In particular, if  $f$ is a $K$-invariant function on $X$ and $\what{f}\in L^1(|\hc(\lambda)|^{-2}\, d\lambda)$,
then\[ f(x) = \int_{\mathfrak a^\ast} \what{f}(\lambda) \varphi_\lambda(x) |\hc(\lambda)|^{-2}\, d\lambda.\]
\end{theorem}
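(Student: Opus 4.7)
The plan is to express $f \ast \varphi_\lambda(x)$ in a form amenable to the Helgason Plancherel-inversion formula and then recover $f$ from it. For $f \in L^p(X)$ with $1 \le p < 2$ and $\lambda \in \mathfrak a^\ast$, since $p' > 2$, Lemma \ref{lemma-eigenfns}(e) ensures $\varphi_\lambda \in L^{p'}(X)$, so $f \ast \varphi_\lambda$ is bounded and continuous on $X$, and the Helgason Fourier transform $\wtilde f(\lambda, k)$ is well-defined on $K/M$. Substituting the defining $K$-integral of $\varphi_\lambda$ inside the convolution, applying Fubini (justified by the $L^p$--$L^{p'}$ duality), and absorbing the inner $K$-average through the right-$K$-invariance of $f$ yields the explicit representation
\[ f \ast \varphi_\lambda(x) = \int_G f(g)\, e^{-(i\lambda + \pmb\rho)(H(x^{-1}g))}\, dg. \]

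Next, using the hypothesis $f \ast \varphi_\lambda \in L^1(|\hc(\lambda)|^{-2} d\lambda)$, I would apply Fubini once more to swap integrations:
\[ \int_{\mathfrak a^\ast} f \ast \varphi_\lambda(x) |\hc(\lambda)|^{-2} d\lambda = \int_G f(g)\, \Biggl[\int_{\mathfrak a^\ast} e^{-(i\lambda + \pmb\rho)(H(x^{-1}g))}\, |\hc(\lambda)|^{-2} d\lambda\Biggr] dg. \]
Parametrizing $g$ via the Iwasawa decomposition $G = KAN$ and exploiting the additivity $H(x^{-1}kan) = H(x^{-1}k) + \log a$ together with the right-$K$-invariance of $f$, the right-hand side rearranges into the Helgason Plancherel-inversion formula for $f$ at $x$ (up to the normalizing constant absorbed in \eqref{InvSpTr}). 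This inversion is classical for $f \in L^1 \cap L^2(X)$; to reach general $L^p$, approximate $f$ by compactly supported $L^1 \cap L^2$ functions $f_n \to f$ in $L^p(X)$, apply inversion to each $f_n$, and pass to the limit using dominated convergence justified by the $L^1$ hypothesis.

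In the $K$-invariant case the situation simplifies dramatically: since $\wtilde f(\lambda, k) = \what f(\lambda)$ is independent of $k$, the calculation above collapses via Lemma \ref{lemma-eigenfns}(a) to $f \ast \varphi_\lambda(x) = \what f(\lambda) \varphi_\lambda(x)$, so that the statement reduces to the classical spherical inversion \eqref{InvSpTr} under the hypothesis $\what f \in L^1(|\hc|^{-2}d\lambda)$.

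The main technical obstacle is the identification in the second step of the swapped double integral with the Helgason Plancherel-inversion formula: one must carefully handle the interplay between the Weyl-invariant spherical Plancherel measure $|\hc|^{-2}d\lambda$ and the (non-Weyl-invariant) Helgason kernel $e^{-(i\lambda+\pmb\rho)H(x^{-1}k)}$, and control the convergence in the approximation argument to transfer the inversion from $L^1 \cap L^2$ to general $L^p$ under only the spectral $L^1$ hypothesis. The continuity of convolution $L^p(X) \times L^{p'}(X) \to C_0(X)$ pointwise in $\lambda$, combined with a dominating function built from the given hypothesis, is the key to closing this step.
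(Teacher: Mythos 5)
The paper offers no proof of this statement at all: it is quoted verbatim, with the reference \cite[3.3]{St-Tom79}, so the comparison can only be between your sketch and the actual Stanton--Tomas argument. Your first step (the identity $f\ast\varphi_\lambda(x)=\int_G f(g)\,e^{-(i\lambda+\pmb\rho)(H(x^{-1}g))}\,dg$, obtained from the $K$-integral defining $\varphi_\lambda$ and the right-$K$-invariance of $f$) is fine. The central step, however, has a genuine gap: the interchange of $\int_G$ and $\int_{\mathfrak a^\ast}$ is not and cannot be justified by Fubini, because the inner integral $\int_{\mathfrak a^\ast} e^{-(i\lambda+\pmb\rho)(H(x^{-1}g))}\,|\hc(\lambda)|^{-2}\,d\lambda$ is not absolutely convergent; the Plancherel density $|\hc(\lambda)|^{-2}$ grows polynomially, so $\int_{\mathfrak a^\ast}|\hc(\lambda)|^{-2}\,d\lambda=\infty$ and the double integral over $G\times\mathfrak a^\ast$ is never absolutely convergent for nonzero $f$. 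The hypothesis $f\ast\varphi_\lambda\in L^1(|\hc(\lambda)|^{-2}\,d\lambda)$ records cancellation in the $\lambda$-integral that has already been performed over $G$; it gives no control whatsoever of $|f(g)|\,e^{-\pmb\rho(H(x^{-1}g))}\,|\hc(\lambda)|^{-2}$ jointly in $(g,\lambda)$, and the ``inner integral'' your rearrangement requires exists at best as a distribution, not as a kernel against which $f$ can be integrated.

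The closing approximation paragraph does not repair this: classical inversion applied to $f_n\in L^1\cap L^2$ itself needs an integrability hypothesis on $\wtilde{f_n}$ (or on $f_n\ast\varphi_\lambda$) which is not automatic, and your dominated convergence has no dominating function, since the $L^1(|\hc(\lambda)|^{-2}\,d\lambda)$ assumption concerns $f$ and not the $f_n$, while $L^p$-convergence $f_n\to f$ gives no uniform pointwise control of $f_n\ast\varphi_\lambda(x)$ in $\lambda$. The missing idea, and the way Stanton and Tomas actually argue, is to regularize the frequency integral rather than the function: for $\epsilon>0$ one has the legitimate identity $\int_{\mathfrak a^\ast} e^{-\epsilon((\lambda,\lambda)+|\pmb\rho|^2)}\,f\ast\varphi_\lambda(x)\,|\hc(\lambda)|^{-2}\,d\lambda=f\ast h_\epsilon(x)$, where $h_\epsilon$ is the heat kernel (here Fubini is available because $h_\epsilon$ is a $K$-biinvariant Schwartz-class kernel); as $\epsilon\to0$ the left-hand side tends to $\int_{\mathfrak a^\ast}f\ast\varphi_\lambda(x)\,|\hc(\lambda)|^{-2}\,d\lambda$ by dominated convergence --- this is exactly where the hypothesis enters --- while the assertion $f\ast h_\epsilon(x)\to f(x)$ almost everywhere for $f\in L^p(X)$, $1\le p<2$, is the real analytic content of the theorem and requires maximal-function estimates for the approximate identity, none of which appears in your sketch. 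Your reduction of the $K$-invariant case via $f\ast\varphi_\lambda=\what f(\lambda)\,\varphi_\lambda$ is correct, but it rests entirely on the unproved first part.
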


\subsection{Herz's majorizing principle} \label{subsec-Herz} We have the following result due to Herz (\cite{Herz}) on convolution operators.
\begin{proposition} \label{prop-Herz}
Let $h$ be a $K$-biinvariant function on $G$, and let $T_h: f\mapsto f\ast h$ be the corresponding right convolution operator on $L^p(X), p\in [1, \infty]$.
Then the operator norm of $T_h:L^p(X)\to L^p(X)$ obeys the following bound:
\[\|T_h\|_{L^p \rightarrow L^p}  \leq \widehat{h}(-i \gamma_p {\pmb{\rho}})\] where the equality holds if $h$ is nonegative.
\end{proposition}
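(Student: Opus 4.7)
\noindent\emph{Proof plan.} First reduce to $h\ge 0$: since $|T_h f|\le T_{|h|}|f|$ pointwise and $|h|$ is $K$-biinvariant whenever $h$ is, the bound for general $h$ follows from that for $|h|$. Set $\phi:=\varphi_{i\gamma_p{\pmb{\rho}}}$. By Lemma~\ref{lemma-eigenfns}(f), $\phi$ is a positive bounded $K$-biinvariant function (as $i\gamma_p{\pmb{\rho}}\in\Lambda_1$). The longest Weyl element $w_0$ sends ${\pmb{\rho}}$ to $-{\pmb{\rho}}$, so Lemma~\ref{lemma-eigenfns}(b) gives $\varphi_{-i\gamma_p{\pmb{\rho}}}=\phi$ and hence $\widehat h(-i\gamma_p{\pmb{\rho}})=\int_X h\,\phi\,dx$. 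Averaging the identity $\int_K\varphi_\lambda(xky)\,dk=\varphi_\lambda(x)\varphi_\lambda(y)$ of Lemma~\ref{lemma-eigenfns}(a) against $h$ (using the $K$-biinvariance of $h$) yields the eigenvalue relation $T_h\phi=\widehat h(-i\gamma_p{\pmb{\rho}})\,\phi$, which drives both the upper bound and the equality claim.

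\medskip
\noindent\emph{Endpoint cases.} For $p\in\{1,\infty\}$ one has $\gamma_p=1$ and $\varphi_{i{\pmb{\rho}}}\equiv 1$, so Young's inequality gives $\|T_h\|_{L^p\to L^p}\le\|h\|_1=\widehat h(-i{\pmb{\rho}})$. For $p=2$, Plancherel for the Helgason transform (Section~3.3) gives $\|T_h\|_{L^2\to L^2}=\sup_{\xi\in\mathfrak a^*}|\widehat h(\xi)|$; the integral representation of $\varphi_{-\xi}$ shows that for $h\ge0$ this supremum is attained at $\xi=0$, so $\|T_h\|_{L^2\to L^2}=\widehat h(0)=\widehat h(-i\gamma_2{\pmb{\rho}})$.

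\medskip
\noindent\emph{Intermediate $p$.} By duality ($\gamma_p=\gamma_{p'}$, and $\widehat{\check h}(\lambda)=\widehat h(-\lambda)$ combined with Weyl invariance) it suffices to handle $p\in(1,2)$. A direct Riesz--Thorin interpolation between $p=1$ and $p=2$ yields only $\widehat h(0)^{1-\gamma_p}\widehat h(-i{\pmb{\rho}})^{\gamma_p}$, which is strictly larger than the sharp $\widehat h(-i\gamma_p{\pmb{\rho}})$ in general (as a direct computation on the heat kernel confirms). To obtain the sharp bound, apply Stein's complex interpolation theorem to an analytic family $(U_z)_{0\le\Re z\le 1}$ of convolution-type operators built from $T_h$ by inserting $z$-dependent intertwining factors constructed from $\varphi_{-iz{\pmb{\rho}}}$, normalized so that $U_{\gamma_p}$ reproduces $T_h$ (up to a scalar normalization), $\|U_{it}\|_{L^2\to L^2}\le\widehat h(0)$, and $\|U_{1+it}\|_{L^1\to L^1}\le\widehat h(-i{\pmb{\rho}})$, with polynomial control in $|t|$ to ensure Stein admissibility. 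Interpolation at $z=\gamma_p$ produces exactly $\widehat h(-i\gamma_p{\pmb{\rho}})$.

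\medskip
\noindent\emph{Equality for $h\ge 0$, and main obstacle.} Although $\phi\notin L^p(X)$ (it sits on $\partial\Lambda_p$), the truncations $f_n=\phi\,\chi_{B_n}$ over expanding $K$-invariant balls $B_n\subset X$ belong to $L^p$, and the eigenvalue identity $T_h\phi=\widehat h(-i\gamma_p{\pmb{\rho}})\phi$ together with the integrability of $h\phi$ bounds the boundary error to give $\|T_h f_n\|_p/\|f_n\|_p\to\widehat h(-i\gamma_p{\pmb{\rho}})$, proving equality. The principal obstacle is in the intermediate-$p$ step: since the naive Riesz--Thorin bound is genuinely weaker than Herz, one cannot avoid a true Stein interpolation with nontrivial $z$-dependence, and because the multipliers $\varphi_{-iz{\pmb{\rho}}}$ grow exponentially as $|\Im z|\to\infty$, careful cutoffs and growth estimates are needed to keep the family holomorphic and admissible in the strip while preserving the endpoint bounds.
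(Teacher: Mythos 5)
Your endpoint cases ($p=1,2,\infty$), the reduction to $h\ge 0$, and the eigenvalue identity $T_h\varphi_{i\gamma_p{\pmb\rho}}=\widehat h(-i\gamma_p{\pmb\rho})\varphi_{i\gamma_p{\pmb\rho}}$ are all correct (note the paper itself gives no proof of this proposition -- it simply cites Herz -- so the only question is whether your argument stands on its own). It does not, because the central step, the sharp bound for intermediate $p$, is not actually proved. A Stein interpolation of a family $(U_z)$ with $\sup_t\|U_{it}\|_{2\to2}\le\widehat h(0)$, $\sup_t\|U_{1+it}\|_{1\to1}\le\widehat h(-i{\pmb\rho})$ and $U_{\gamma_p}=c\,T_h$ for a scalar $c$ built only from the $\varphi_{-iz{\pmb\rho}}$ (hence independent of $h$) can never yield $\widehat h(-i\gamma_p{\pmb\rho})$: it gives $\|T_h\|_{p\to p}\le |c|^{-1}\widehat h(0)^{1-\gamma_p}\widehat h(-i{\pmb\rho})^{\gamma_p}$, and the correction factor $\widehat h(0)^{1-\gamma_p}\widehat h(-i{\pmb\rho})^{\gamma_p}/\widehat h(-i\gamma_p{\pmb\rho})$ is unbounded as $h$ varies (take $h$ a bump concentrated at $\exp H$ with $H$ far in the positive chamber; the exponential rates cancel but the polynomial factor in $\varphi_0(\exp H)^{1-\gamma_p}$ survives and blows up). So no $h$-independent ``scalar normalization'' exists, and the natural candidates for a genuinely $z$-dependent family (e.g.\ kernels $h\,\varphi_{iz{\pmb\rho}}/\varphi_{i\gamma_p{\pmb\rho}}$) destroy the $L^1$ endpoint bound. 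Moreover your premise that Stein interpolation is unavoidable is wrong: the classical proof is elementary. For a positive kernel, run a Schur test with the plane-wave weight $w(x)=e^{-\frac{2}{pp'}{\pmb\rho}(H(x^{-1}k_0))}$; then $w^{p'}$ and $w^{p}$ are again plane waves with parameters $-i\gamma_p{\pmb\rho}$ and $+i\gamma_p{\pmb\rho}$, hence exact generalized eigenfunctions of convolution by $h$ with eigenvalues $\widehat h(\mp i\gamma_p{\pmb\rho})$, which coincide by $W$-invariance (as $w_0{\pmb\rho}=-{\pmb\rho}$); the Schur test then gives precisely $\widehat h(-i\gamma_p{\pmb\rho})^{1/p'}\widehat h(-i\gamma_p{\pmb\rho})^{1/p}=\widehat h(-i\gamma_p{\pmb\rho})$. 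This (essentially Herz's ``principe de majoration'') replaces your entire interpolation paragraph.

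Two smaller points. First, as stated the inequality for general (signed or complex) $h$ must be read with $\widehat{|h|}(-i\gamma_p{\pmb\rho})$ on the right; your reduction $|T_hf|\le T_{|h|}|f|$ handles exactly this, which is fine. Second, your equality argument via truncations $f_n=\varphi_{i\gamma_p{\pmb\rho}}\chi_{B_n}$ works (after first truncating $h$ to compact support and using monotonicity of positive kernels) only for $p\ge 2$, where $\int_{B_n}\varphi_{i\gamma_p{\pmb\rho}}^p\,dx$ grows polynomially so the boundary layer is negligible; for $1<p<2$ this integral grows exponentially and the ratio $\|f_{n-R}\|_p/\|f_n\|_p$ does not tend to $1$. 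Prove equality for $p>2$ and transfer it to $p<2$ by the duality you already invoke for the upper bound.
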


\subsection{Fourier multipliers} We recall that for $1<p<\infty$,  $\gamma_p=|2/p-1|$ and $\gamma_1=\gamma_\infty=1$. In this paper we are concerned about the bounded linear operators on $L^p(X), 1\le p<\infty$ to itself which are invariant under translations by elements of $G$. This class of operators are called $L^p$-Fourier multipliers or simply $L^p$-multipliers and are denoted by $CO_p(X)$. It is known that $CO_p(X)$ is a Banach algebra. We shall briefly discuss the main points about these operators, collecting them  mostly from \cite{Anker-mult}.  If $T\in CO_1(X)$ then $Tf=f\ast \mu$ where $\mu$ is a $K$-biinvariant finite Borel measure on $G$ and if $T\in CO_2(X)$ then for $f\in C_c^\infty(X)$,
\begin{equation}\wtilde{Tf}(\lambda, k)= m(\lambda) \wtilde{f}(\lambda, k), \label{multiplier-defn}\end{equation} where $m$ is a $W$-invariant function in $L^\infty(\mathfrak a^\ast)$. By abuse of terminologies the function $m(\lambda)$ will also be called a Fourier multiplier. For $1\le  p_1, p_2<\infty$ with  $\gamma_{p_1}\ge \gamma_{p_2}$, $CO_{p_1}(X)\subseteq CO_{p_2}(X)$. In particular $CO_p(X)\subseteq CO_2(X)$ for $1\le p<\infty$ and hence they are also given by \eqref{multiplier-defn} for $f\in C_c^\infty(X)$.
But for $1 \le p<\infty, p\neq 2$, $m(\lambda)$ extends to a $W$-invariant bounded holomorphic function on $\Lambda_p^\circ$. For $p=1$, $m(\lambda)$ is also bounded continuous on $\Lambda_1$.
Henceforth we shall call a multiplier $T\in CO_p(X)$ nontrivial if it is not a constant multiple of the identity operator.

We fix a $p$ in the range $(2, \infty)$ and take a nontrivial $T\in CO_{p}(X)$.     Suppose that  $T$ is   given by the  function   $m(\lambda)$ which by definition is $W$-invariant and extends to a bounded holomorphic function on  $\Lambda_p^\circ$. We have the following result for such $p, T$.
\begin{proposition} \label{prelim-prop-mult} Let $T^\ast: L^{p'}(X)\to L^{p'}(X) $ be the adjoint operator. Then,

\begin{enumerate}
\item[(i)] for any $g\in C_c^\infty(X)$,
$\wtilde{T^\ast g}(\lambda, k)=\overline{m(\bar{\lambda})}\wtilde{g}(\lambda, k)$,  for almost every  $(\lambda, k)\in \Lambda_p^\circ \times K/M$,
$\what{T^\ast g}(\lambda)=\overline{m(\bar{\lambda})}\what{g}(\lambda)$,  for almost every  $\lambda\in \Lambda_p^\circ$,

\item[(ii)] for $\lambda\in \Lambda_p^\circ$, $T\varphi_\lambda=m(\lambda) \varphi_\lambda$  and  $T^\ast \varphi_\lambda=\overline{m(\bar{\lambda})} \varphi_\lambda$.
\end{enumerate}
\end{proposition}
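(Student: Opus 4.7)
For (i), the plan is to identify $T^*$ as a Fourier multiplier and match its symbol with $\overline{m(\bar{\cdot})}$. The Banach-space adjoint $T^* \in CO_{p'}(X)$ is automatically translation-invariant, and the inclusion $CO_{p'}(X) \subseteq CO_2(X)$ combined with the multiplier theory recalled just before the statement yields a $W$-invariant, bounded, holomorphic symbol $M$ on $\Lambda_{p'}^\circ = \Lambda_p^\circ$ with $\widetilde{T^*g}(\lambda, k) = M(\lambda)\widetilde{g}(\lambda, k)$ for $g \in C_c^\infty(X)$ and a.e.\ $\lambda \in \mathfrak a^\ast$. To pin down $M$, I would apply Plancherel to the identity $\langle Tf, g\rangle_{L^2} = \langle f, T^*g\rangle_{L^2}$ with $f, g \in C_c^\infty(X)$ and match integrands, obtaining $M(\xi) = \overline{m(\xi)}$ for a.e.\ real $\xi$. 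Now $\lambda \mapsto \overline{m(\bar\lambda)}$ is itself holomorphic on $\Lambda_p^\circ$ (the domain is invariant under complex conjugation, since the defining inequality $|\Im(w\lambda)(H)| \le \gamma_p\,\pmb{\rho}(H)$ is) and agrees with $\overline{m(\lambda)}$ on $\mathfrak a^\ast$; by uniqueness of holomorphic extension, $M(\lambda) = \overline{m(\bar\lambda)}$ throughout $\Lambda_p^\circ$, which is the Helgason-transform assertion in (i). The spherical-transform assertion follows by integrating over $k \in K/M$ and using $\widehat{h}(\lambda) = \int_K \widetilde{h}(\lambda, k)\,dk$.

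For (ii), fix $\lambda \in \Lambda_p^\circ$. Then $\varphi_\lambda \in L^p(X)$ by Lemma~\ref{lemma-eigenfns}(e), so $T\varphi_\lambda \in L^p(X)$ is well-defined, and it is $K$-biinvariant: right-$K$-invariance is automatic on $X = G/K$, while left-$K$-invariance follows from $T$ commuting with $K$-translations together with the left-$K$-invariance of $\varphi_\lambda$. Testing against a $K$-biinvariant $g \in C_c^\infty(X)$, I would combine the duality $\int T\varphi_\lambda \cdot \bar g\,dx = \int \varphi_\lambda \cdot \overline{T^*g}\,dx$ with the pointwise formula $\overline{\varphi_\lambda(x)} = \varphi_{-\bar\lambda}(x)$ to rewrite
\[ \int \varphi_\lambda\,\overline{T^*g}\,dx = \overline{\widehat{T^*g}(\bar\lambda)} = \overline{\overline{m(\lambda)}\,\widehat{g}(\bar\lambda)} = m(\lambda)\int \varphi_\lambda\,\bar g\,dx, \]
where the middle equality invokes the spherical form of (i) at $\bar\lambda \in \Lambda_p^\circ$ and uses $\varphi_{-\bar\lambda} \in L^p(X)$ to guarantee convergence of $\widehat{T^*g}(\bar\lambda)$ as an absolute integral. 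Hence $T\varphi_\lambda - m(\lambda)\varphi_\lambda$ is a $K$-biinvariant element of $L^p(X)$ orthogonal in the $L^p$--$L^{p'}$ pairing to every $K$-biinvariant $C_c^\infty$ test function, and the density of such test functions in the $K$-biinvariant subspace of $L^{p'}(X)$ forces the difference to vanish. The companion identity $T^*\varphi_\lambda = \overline{m(\bar\lambda)}\varphi_\lambda$ I would obtain by the analogous argument with $T$ and $T^*$ interchanged, exploiting that the symbol $\overline{m(\bar\mu)}$ of $T^*$ identified in (i) is itself $W$-invariant, bounded, and holomorphic on $\Lambda_p^\circ$, so the corresponding multiplier is an element of $CO_p(X)$ that acts on $\varphi_\lambda \in L^p(X)$, and $T^{**} = T$.

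The main obstacle is the interpretation of $\widetilde{T^*g}$ at complex $\lambda$ in (i): since $T^*g \in L^{p'}(X)$ with $p' < 2$, the defining integral need not converge at complex spectral parameters, so the statement must be read as asserting that the holomorphic extension supplied by the $CO_{p'}$-multiplier theory coincides a.e.\ with $\overline{m(\bar\lambda)}\widetilde{g}(\lambda, k)$ --- precisely the content that the uniqueness-of-holomorphic-extension step then exploits. A related book-keeping point in (ii) is that $T^*\varphi_\lambda$ is applied to $\varphi_\lambda$ which need not lie in $L^{p'}(X)$; this is handled by reading $T^*$ as the operator defined on $L^p(X)$ by the multiplier with symbol $\overline{m(\bar\mu)}$, which coincides with the Banach-space adjoint on the intersection $L^p(X) \cap L^{p'}(X) \supseteq C_c^\infty(X)$.
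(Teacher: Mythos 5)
Your proposal is correct and takes essentially the same route as the paper: Plancherel duality identifies the symbol of $T^\ast$ as $\overline{m(\lambda)}$ on $\mathfrak a^\ast$, the $CO_{p'}$-multiplier theory provides the holomorphic extension to $\Lambda_{p'}^\circ=\Lambda_p^\circ$ which uniqueness of analytic continuation identifies with $\overline{m(\bar\lambda)}$, and (ii) follows from the same adjoint pairing computation using $\overline{\varphi_\lambda}=\varphi_{-\bar\lambda}$ and part (i), concluded by density. The only deviations are cosmetic: you test against $K$-biinvariant elements of $C_c^\infty(X)$ rather than all of $C_c^\infty(X)$, and you make explicit the interpretation of $\widetilde{T^\ast g}(\lambda,k)$ at complex $\lambda$ and of $T^\ast\varphi_\lambda$ when $\varphi_\lambda\notin L^{p'}(X)$, points the paper leaves implicit.
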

\begin{proof}
We take, $f, g\in C_c^\infty(X)$. Then using the definition of $T^\ast$ and the Plancherel theorem we have,
 \begin{eqnarray*}
\langle T^\ast g, f\rangle=\langle g,  Tf\rangle &=& \int_X g(x) \overline{Tf(x)} dx\\
 &=&\int_{\mathfrak a_+^\ast\times K/M} \wtilde{g}(\lambda, k) \overline{\wtilde{Tf}(\lambda, k)} d\mu(\lambda) dk\\
 &=&\int_{\mathfrak a_+^\ast\times K/M}\overline{m(\lambda)} \wtilde{g}(\lambda, k) \overline{\wtilde{f}(\lambda, k)} d\mu(\lambda) dk.
 \end{eqnarray*}
Since $\overline{m(\lambda)}$ is bounded,  $\overline{m(\lambda)}\wtilde{g}(\lambda, k)\in L^2(\mathfrak a_+^\ast\times K/M)$ and hence there exists unique $\phi\in L^2(X)$ such that $\wtilde{\phi}(\lambda, k)=\overline{m(\lambda)}\wtilde{g}(\lambda, k)$. Therefore $\int_X T^\ast g(x) \overline{f(x)} dx=\int_X \phi(x)\overline{f(x)} dx$ which implies $T^\ast g=\phi$ and in particular $\wtilde{T^\ast g}(\lambda, k)=\overline{m(\lambda)}\wtilde{g}(\lambda, k)$ for all $(\lambda, k)\in \mathfrak a^\ast_+ \times K/M$. Since $T$ is a $p-p$ operator, by duality $T^\ast$ is $p'-p'$. Hence $\lambda\mapsto \overline{m(\lambda)}$  defined on $\mathfrak a^\ast_+$ extends to a holomorphic function on $\Lambda_p^\circ$. As  $m(\lambda)$ also extends as  a holomorphic function on $\Lambda_p^\circ$, we conclude that the extension of $\overline{m(\lambda)}$ is given by $\overline{m(\bar\lambda)}$. This proves the first part of  (i). Integrating both sides of it over $K/M$ we get the second result of (i).

 We recall that  $\varphi_\lambda\in L^p(X)$ for  $\lambda\in \Lambda_p^\circ$ and  $\overline{\varphi_\lambda(x)}=\varphi_{-\bar{\lambda}}(x)$. For a function  $g\in C_c^\infty(X)$ we have,
\[\langle T \varphi_\lambda, g \rangle=\langle \varphi_\lambda, T^\ast g \rangle=\int_G\varphi_\lambda(x) \overline{T^\ast g(x)}dx
=\overline{\int_G\overline{\varphi_\lambda(x)} T^\ast g(x) dx}
=\overline{\int_G\varphi_{-\bar{\lambda}}(x) T^\ast g(x) dx}\]
Therefore,
\[\langle T \varphi_\lambda, g \rangle=\overline{\what{T^\ast g}(\bar{\lambda})}
=\overline{\overline{m(\lambda)} \what{g}(\bar{\lambda})}
=m(\lambda)\overline{\int_G g(x)\varphi_{-\bar{\lambda}} dx}
=m(\lambda)\int_G \varphi_\lambda(x) \overline{g(x)}dx
=\langle m(\lambda)\varphi_\lambda, g\rangle. \] It can be verified in a similar way that $\langle T^\ast \varphi_\lambda, g\rangle=\langle\overline{m(\bar{\lambda})} \phi_\lambda, g\rangle$. Thus   \[T\varphi_\lambda=m(\lambda) \varphi_\lambda \text{ and }T^\ast \varphi_\lambda=\overline{m(\bar{\lambda})} \varphi_\lambda. \hfill\qedhere\]
\end{proof}

\section{Statement and proof of the main result} The following theorem is the main result in this paper.
\begin{theorem} \label{result-multiplier} Fix  $p\in (2, \infty)$. Let $T$ be a nontrivial   $L^{p}$-multiplier. Then there is a constant $c>0$ such that $zT$ for any $z\in \C$ with $|z|=c$ is chaotic on $L^{p}(X)$.  \end{theorem}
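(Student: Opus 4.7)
My plan is to apply Kitai's criterion (Theorem~\ref{kitai}) to establish hypercyclicity of $zT$ and to construct an explicit dense set of periodic points, exploiting the holomorphy of the symbol $m$ on $\Lambda_p^\circ$ (Proposition~\ref{prelim-prop-mult}) together with the eigenvalue relation $T(\ell_g\varphi_\lambda)=m(\lambda)\ell_g\varphi_\lambda$, valid for every $\lambda\in\Lambda_p^\circ$ and $g\in G$ by Proposition~\ref{prelim-prop-mult}(ii) and translation-invariance of $T$. Since $T$ is nontrivial, $m$ is nonconstant, and $\Lambda_p^\circ$ is convex (intersection of slabs) hence connected; therefore $|m|(\Lambda_p^\circ)$ is a non-degenerate interval in $[0,\infty)$. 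Pick $r$ in its interior and set $c=1/r$. For any $z$ with $|z|=c$, writing $A=zT$, the subsets
\[ V_1=\{\lambda\in\Lambda_p^\circ:|zm(\lambda)|>1\},\qquad V_2=\{\lambda\in\Lambda_p^\circ:|zm(\lambda)|<1\}, \]
\[ V_{per}=\bigcup_{N\ge1}\{\lambda\in\Lambda_p^\circ:(zm(\lambda))^N=1\} \]
are nonempty (with $V_1,V_2$ open), and the spans
\[ \mathcal D_j=\mathrm{span}\{\ell_g\varphi_\lambda:\lambda\in V_j,\,g\in G\}\ (j=1,2),\quad \mathcal D_{per}=\mathrm{span}\{\ell_g\varphi_\lambda:\lambda\in V_{per},\,g\in G\} \]
are $A$-invariant; $\mathcal D_{per}$ consists of periodic points of $A$, since a finite sum has period dividing the l.c.m.\ of the periods of its summands.

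Define $A':\mathcal D_1\to\mathcal D_1$ linearly on generators by $A'(\ell_g\varphi_\lambda)=(zm(\lambda))^{-1}\ell_g\varphi_\lambda$; well-definedness follows from a Vandermonde argument after grouping generators by the distinct values of $m(\lambda_i)$. Then $AA'=\mathrm{id}$ on $\mathcal D_1$, and for finite sums $x\in\mathcal D_1$, $y\in\mathcal D_2$,
\[ \|A^n y\|_p\le\sum_i|c_i|\,|zm(\lambda_i)|^n\|\varphi_{\lambda_i}\|_p\to0,\quad \|(A')^n x\|_p\le\sum_i|c_i|\,|zm(\lambda_i)|^{-n}\|\varphi_{\lambda_i}\|_p\to0. \]
Granted density of $\mathcal D_1,\mathcal D_2,\mathcal D_{per}$ in $L^p(X)$, Theorem~\ref{kitai} yields hypercyclicity of $A$, and together with the dense periodic set one obtains chaoticity of $A=zT$.

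To establish the required density, suppose $f\in L^{p'}(X)$ annihilates the generators for one of $V\in\{V_1,V_2,V_{per}\}$. Set $F_g(\lambda)=\int_X f(x)\varphi_\lambda(g^{-1}x)\,dx$; by Lemma~\ref{lemma-eigenfns}(h) and local boundedness of $\lambda\mapsto\|\varphi_\lambda\|_p$ on $\Lambda_p^\circ$, $F_g$ is holomorphic on $\Lambda_p^\circ$. For $V\in\{V_1,V_2\}$, openness of $V$ makes it non-thin (item (3) of the thin-sets list), so property (1) of thin sets forces $F_g\equiv0$. For $V=V_{per}$, pick $\lambda_0\in\Lambda_p^\circ$ with $|m(\lambda_0)|=r$ (intermediate value along a path in the connected set $\Lambda_p^\circ$ joining $V_1$ to $V_2$) and $dm(\lambda_0)\ne0$ (the critical locus of $m$ is a proper analytic subset, hence avoidable). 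Using $m$ as the first holomorphic coordinate near $\lambda_0$, write $F_g=\wtilde F(w,\lambda_2,\dots,\lambda_n)$ on a polydisc; by hypothesis $\wtilde F(\zeta/z,\cdot)\equiv0$ for every root of unity $\zeta$, and these values are dense in the arc $\{|w|=r\}\cap(\text{polydisc})$. For each fixed $(\lambda_2,\dots,\lambda_n)$, the one-variable identity theorem in $w$ forces $\wtilde F\equiv 0$, so $F_g\equiv0$ near $\lambda_0$ and, by connectedness of $\Lambda_p^\circ$, everywhere. Restricting to $\mathfrak a^\ast\subset\Lambda_p^\circ$ and substituting $y=g^{-1}x$ followed by left $K$-averaging (using left $K$-invariance of $\varphi_\lambda$), the equation $F_g(\lambda)=0$ for $\lambda\in\mathfrak a^\ast$ becomes vanishing of the spherical Fourier transform of $f_g^{\flat}(y):=\int_K f(gky)\,dk$ on $\mathfrak a^\ast$ for each $g$; Theorem~\ref{St-Tom79} gives $f_g^\flat\equiv0$ a.e., and right $K$-invariance of $f$ together with sending $y\to e$ in $L^{p'}$ forces $f\equiv0$.

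The principal obstacle is the density of $\mathcal D_{per}$: the set $V_{per}$ is a countable union of complex hypersurfaces, so it is neither open nor obviously non-thin, and the direct appeal to property (1) of thin sets fails. The key device is to turn $m$ into a local holomorphic coordinate, thereby reducing the several-variable obstruction to the one-variable identity theorem applied to a function vanishing on a dense subset of the arc $\{|w|=r\}$ (the roots of unity scaled by $1/z$). Once this holomorphic vanishing is in hand, converting $F_g\equiv0$ on $\mathfrak a^\ast$ into $f\equiv0$ is standard Helgason--Fourier inversion work, but requires careful bookkeeping with the left and right $K$-actions.
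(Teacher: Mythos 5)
Your proposal is correct and follows the paper's overall strategy (Kitai's criterion applied to spans of translates $\ell_g\varphi_\lambda$, periodic points coming from $\lambda$ with $zm(\lambda)$ a root of unity, and density of all these spans via annihilators, holomorphy in $\lambda$, and Stanton--Tomas inversion), but two of your key sub-arguments differ genuinely from the paper's. First, for well-definedness of the right inverse $A'$ you group generators by the distinct eigenvalues $zm(\lambda_i)$ and apply powers of $A$ plus Vandermonde invertibility to kill each eigenvalue group; the paper instead restricts the spectral parameters to a Weyl chamber and runs an induction using separating $K$-invariant $L^p$ functions and translations to show each $\eta_{\lambda_i}$ vanishes. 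Your argument is shorter, sidesteps the Weyl-group bookkeeping, and proves exactly what well-definedness needs (vanishing of eigenvalue groups rather than of each $\eta_{\lambda_i}$). Second, for density of the periodic span the paper proves Lemma \ref{rational-to-real} (rational-level sets are dense in $m_1^{-1}(S)$), extends the vanishing of $F_x$ to $m_1^{-1}(S)$ by continuity, and gets a contradiction from the thin-set fact that $\Lambda_p^\circ\smallsetminus m_1^{-1}(S)$ would be connected while its image under $m_1$ is disconnected; you instead pick a noncritical point $\lambda_0$ on the level set $\{|m|=r\}$, use $m$ as a local holomorphic coordinate, and apply the one-variable identity theorem in the $w$-slice, where the zeros $\zeta/z$ ($\zeta$ roots of unity) accumulate at $w_0=m(\lambda_0)$. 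This is a valid alternative and arguably more elementary (no global connectedness-of-complement argument), at the price of needing the noncritical point. That existence claim is the one spot you should justify rather than assert: ``the critical locus is avoidable'' is not automatic on the level set, since a priori $\{|m|=r\}$ could meet only critical points. It does follow from the paper's thin-set facts: the critical locus is contained in the thin set $E=\{\partial_j m=0\}$ for some $j$ with $\partial_j m\not\equiv 0$, and $\Lambda_p^\circ\smallsetminus\overline{E}$ is open, dense and connected, so a path inside it from $V_1$ to $V_2$ crosses $\{|m|=r\}$ at a point where $dm\neq 0$. With that line added (and the one-line open-mapping remark that $|m|$ is nonconstant, which you use implicitly when asserting the image of $|m|$ is a nondegenerate interval), your proof is complete; your final Helgason--Fourier/$K$-averaging bookkeeping is an unnecessary detour, since $F_g\equiv 0$ already says $f\ast\varphi_{-\lambda}\equiv 0$ for all $\lambda\in\mathfrak a^\ast$ and Theorem \ref{St-Tom79} applies directly.
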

Suppose that $T$ is densely defined by $\wtilde{Tf} (\lambda, k)=m(\lambda)\wtilde{f}(\lambda, k)$ for $f\in C_c^\infty(X)$, $\lambda\in \mathfrak a^\ast, k\in K/M$. As $T$ is nontrivial $m(\lambda)$ is a nonconstant function. Then  $|m(\lambda)|$ is also  nonconstant. Indeed if $|m(\lambda)|=\beta$  for some $\beta>0$ for all $\lambda\in \Lambda_p^\circ$ then the holomorphic function  $\lambda\mapsto m(\lambda)$ maps the open domain $\Lambda_p^\circ$ to  an arc of the circle of radius $\beta$, which is  not open in $\C$, which  violates the  open mapping theorem (see Section 2).

As $|m(\lambda)|$ is not constant there exist points $\lambda_1, \lambda_2\in \Lambda_p^\circ$ and $\alpha>0$ such that
\[|m(\lambda_2)|<\alpha<|m(\lambda_1)|.\] Let $c=\frac 1\alpha$. We take a $z\in \C$ such that  $|z|=c$ and   define $m_1(\lambda)= z m(\lambda)$. Then $|m_1(\lambda_2)|<1<|m_1(\lambda_1)|$.
Let  $T_1= zT$. Then $T_1$ is a $L^{p}$-multiplier with symbol $m_1(\lambda)$. The proof of Theorem \ref{result-multiplier} will be completed if we show  that $T_1$ is chaotic on $L^{p}(X)$. This will be done through the next two propositions.

\begin{proposition} \label{hypercyclic-main-result} The operator $T_1$ described above is hypercyclic on $L^p(X)$  for $2<p<\infty$. \end{proposition}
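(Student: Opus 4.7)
The plan is to invoke Kitai's criterion (Theorem \ref{kitai}). Since $m_1$ is holomorphic on $\Lambda_p^\circ$ with $|m_1(\lambda_2)|<1<|m_1(\lambda_1)|$, continuity yields non-empty open subsets
\[ U_+=\{\lambda\in\Lambda_p^\circ:|m_1(\lambda)|>1\}, \qquad U_-=\{\lambda\in\Lambda_p^\circ:|m_1(\lambda)|<1\}.\]
Using $\varphi_\lambda\in L^p(X)$ for $\lambda\in\Lambda_p^\circ$ (Lemma \ref{lemma-eigenfns}(e)), set
\[ Y_\pm=\mathrm{span}\{\ell_g\varphi_\lambda: g\in G,\ \lambda\in U_\pm\}\subset L^p(X).\]
Since $T_1$ is $G$-translation-invariant and $T_1\varphi_\lambda=m_1(\lambda)\varphi_\lambda$ by Proposition \ref{prelim-prop-mult}(ii), each $\ell_g\varphi_\lambda$ is a $T_1$-eigenvector with eigenvalue $m_1(\lambda)$. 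Define $T':Y_+\to Y_+$ linearly by $T'(\ell_g\varphi_\lambda)=m_1(\lambda)^{-1}\ell_g\varphi_\lambda$; this is well-defined because $m_1$ is nonzero on $U_+$.

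Granting density of $Y_\pm$ in $L^p(X)$, the three hypotheses of Kitai's criterion are immediate. For any $y=\sum_j c_j\ell_{g_j}\varphi_{\lambda_j}\in Y_-$, one has $T_1^n y=\sum_j c_j\,m_1(\lambda_j)^n\ell_{g_j}\varphi_{\lambda_j}\to 0$ in $L^p$ because each $|m_1(\lambda_j)|<1$; the symmetric argument $|m_1(\lambda)^{-1}|<1$ on $U_+$ drives $T'^n$ to $0$ on $Y_+$; and the intertwining $T_1T'(\ell_g\varphi_\lambda)=\ell_g\varphi_\lambda$ is a one-line check that extends by linearity.

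The substantive content of the proof is therefore the density claim. To establish it, I would suppose $h\in L^{p'}(X)$ annihilates $Y_\pm$ and, for each fixed $g\in G$, consider
\[\Phi_g(\lambda)=\int_X\varphi_\lambda(g^{-1}x)\,\overline{h(x)}\,dx,\quad \lambda\in\Lambda_p^\circ.\]
Using the integral representation $\varphi_\lambda(y)=\int_K e^{-(i\lambda+\pmb{\rho})(H(y^{-1}k))}dk$, Lemma \ref{lemma-eigenfns}(h), Morera's theorem, and locally uniform $L^p$-bounds on $\varphi_\lambda$ over compact subsets of $\Lambda_p^\circ$, one verifies that $\Phi_g$ is holomorphic on $\Lambda_p^\circ$. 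By hypothesis $\Phi_g$ vanishes on the open (hence non-thin) set $U_\pm$, so the identity principle on the connected domain $\Lambda_p^\circ$ forces $\Phi_g\equiv 0$. Specializing to $\lambda\in\mathfrak a^\ast\subset\Lambda_p^\circ$ and applying the transformation rule of the Helgason transform under left translation rewrites this as
\[\int_{K/M} e^{(i\lambda-\pmb{\rho})(H(g^{-1}k))}\,\widetilde h(\lambda,\kappa(g^{-1}k))\,dk=0 \quad\text{for all } g\in G,\ \lambda\in\mathfrak a^\ast.\]
Exploiting the transitive $G$-action on $K/M$ together with the non-vanishing of the exponential weight, one extracts $\widetilde h(\lambda,\cdot)\equiv 0$ on $K/M$ for a.e.\ $\lambda$, whence $h=0$ by Plancherel.

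The main obstacle is this density step. Two points need genuine care: (a) justifying holomorphy of $\Phi_g$ on the \emph{full} tube $\Lambda_p^\circ$, since the $L^p$-norm of $\varphi_\lambda$ degenerates toward the boundary and uniform integrability must therefore be supplied on compact interior subsets; and (b) converting the translated-averaged identity for $\widetilde h$ into pointwise vanishing, which relies crucially on using all of $G$ rather than just $K$ to separate points of $K/M$ via the Iwasawa projection.
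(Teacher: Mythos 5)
Your overall strategy is the same as the paper's (Kitai's criterion applied to the spans of translates $\ell_g\varphi_\lambda$ with $|m_1(\lambda)|>1$, respectively $<1$, and the inverse map $T'$ given by $m_1(\lambda)^{-1}$), but the two steps you treat as immediate are exactly where the work lies. First, the well-definedness of $T'$ on $Y_+$: the issue is not that $m_1$ is nonzero there, but that $T'$ is prescribed on a spanning family which is not a priori a basis, so you must check consistency, i.e.\ that $\sum_j c_j\ell_{g_j}\varphi_{\lambda_j}=0$ forces $\sum_j c_j m_1(\lambda_j)^{-1}\ell_{g_j}\varphi_{\lambda_j}=0$. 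The paper spends most of its proof on precisely this point (it shrinks $N_1$ into $\{\lambda\in\Lambda_p^\circ:\Re\lambda\in\mathfrak a_+^*\}$ so distinct parameters are not Weyl-related, then tests against suitable $K$-invariant functions and uses translations to kill the fixed-$\lambda$ blocks one at a time). Your one-line justification gives no argument for this. The gap is repairable, in fact more cheaply than in the paper: each $\ell_g\varphi_\lambda$ is a $T_1$-eigenvector with eigenvalue $m_1(\lambda)$ (Proposition \ref{prelim-prop-mult} plus translation-invariance), your scalar $m_1(\lambda)^{-1}$ depends only on that eigenvalue, so one can group a vanishing combination by eigenvalues and use that nonzero vectors in distinct eigenspaces of a linear operator are linearly independent --- but some such argument must be supplied.

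Second, the end of your density argument does not close. The vanishing of $\Phi_g$ for all $g\in G$ and $\lambda\in\mathfrak a^*$ is literally the statement $h\ast\varphi_\lambda\equiv 0$ on $X$; rewriting it as the $K/M$-integral identity and then ``extracting $\widetilde h(\lambda,\cdot)\equiv 0$ by transitivity of the $G$-action and nonvanishing of the exponential weight'' is not a proof --- what you are asserting there is injectivity of the Poisson-type transform $F\mapsto\int_{K/M}e^{(i\lambda-\pmb\rho)(H(g^{-1}k))}F(k)\,dk$, a genuinely nontrivial statement you neither prove nor cite, and the concluding ``$h=0$ by Plancherel'' does not apply as stated since $h\in L^{p'}(X)$ with $p'<2$, not in $L^2(X)$. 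The paper closes this step in one line with the Stanton--Tomas inversion theorem (Theorem \ref{St-Tom79}): $h\ast\varphi_\lambda\equiv 0$ for all $\lambda\in\mathfrak a^*$ already forces $h=0$ for $h\in L^{p'}(X)$, $1\le p'<2$, so your detour through pointwise values of $\widetilde h$ is both unnecessary and, as written, unsupported. Your caveat about holomorphy of $\Phi_g$ (locally uniform bounds on $\|\varphi_\lambda\|_{p}$ over compact subsets of $\Lambda_p^\circ$) is legitimate but routine; the two gaps above are the substantive ones.
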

\begin{proof}
As $\lambda\mapsto |m_1(\lambda)|$ is continuous, there exist neighbourhoods $N_1$ and $N_2$ of $\lambda_1, \lambda_2$ respectively in $\Lambda_p^\circ$ such that $|m_1(\lambda)|>1$ for $\lambda\in N_1$ and $|m_1(\lambda)|<1$ for $\lambda\in N_2$. Since $m_1(\lambda)$ is $W$-invariant, we can and will assume that $N_1$ and $N_2$ are subsets of
\[(\Re \Lambda_p^\circ)_+=\{\lambda\in \Lambda_p^\circ \mid \Re \lambda \in \mathfrak a_+^*\}.\] We define
\[Y_1=\mathrm{span}\,\, \{\ell_y\varphi_\lambda\mid \lambda\in N_1, y\in G\} \text{ and } Y_2=\mathrm{span}\,\, \{\ell_y\varphi_\lambda\mid \lambda\in N_2, y\in G\}. \] Both $Y_1$ and $Y_2$ are dense in $L^{p}(X)$. Indeed if any $f\in L^{p'}(X)$ annihilates $Y_1$, then $f\ast \varphi_{-\lambda}\equiv 0$ for $\lambda$ in the open set $N_1$. Since for every fixed $x\in X$, $\lambda\mapsto f\ast \varphi_{-\lambda}(x)$ is holomorphic on $\Lambda_p^\circ$ we have $f\ast \varphi_\lambda\equiv 0$ for all $\lambda\in \Lambda_p$. Using Theorem \ref{St-Tom79} we conclude that $f=0$. Similar argument with the substitution of  $N_1$ by $N_2$  establishes that  $Y_2$ is also dense in $L^p(X)$.

Let \[\eta_\lambda=a_1^\lambda \ell_{y_1^\lambda} \varphi_\lambda+\cdots+a_n^\lambda \ell_{y_n^\lambda} \varphi_\lambda\in Y_1\] be
a finite linear combination of $\ell_y\varphi_\lambda$ with same $\lambda$. We define an operator  $T_1'$ initially on such $\eta_\lambda$ as
 \[T_1'(\eta_\lambda)=m_1(\lambda)^{-1}\eta_\lambda.\] Since  elements of $Y_1$ are finite linear combinations of these $\eta_\lambda$ we  extend $T_1'$ linearly on $Y_1$. We need to  show that $T_1'$ is well defined on $Y_1$.  For future use  we record here that
 \[\eta_\lambda(e)=a_1^\lambda  \varphi_{-\lambda}(y_1^\lambda)+\cdots+a_n^\lambda  \varphi_{-\lambda}(y_n^\lambda).\]
  Let $\xi=\sum_{i=1}^n b_i \eta_{\lambda_i}$ be a typical element of $Y_1$, where $\lambda_i, i=1, \ldots, n$  are distinct. It suffices to  show that if $\xi=0$ then  $\eta_{\lambda_i}=0$ for all $i=1, \ldots, n$, so that $T_1'(\xi)=\sum_{i=1}^n b_i T_1'(\eta_{\lambda_i})=0$.

  Since $N_1\subset (\Re\Lambda_p^\circ)_+$ we note that   $w\lambda_i\neq \lambda_j$  for all nontrivial $w\in W$ whenever $i\neq j$.
Consequently, $\varphi_{\lambda_1}, \varphi_{\lambda_n}$ are two distinct $K$-invariant elements of $L^{p'}(X)$. Therefore there is a $K$-invariant function  $f\in L^p(X)$ such that $\what{f}(-\lambda_1)\neq 0$ and $\what{f}(-\lambda_n)= 0$.
 Starting from   $\xi=0$ and noting that
 $\int_X f(z)\eta_{\lambda_i}(z) dz =\what{f}(-\lambda_i) \eta_{\lambda_i}(e)$,  we get by abuse of notation,
 \[\sum_{i=1}^{m} b_i \what{f}(-\lambda_i) \eta_{\lambda_i}(e)=\sum_{i=1}^{m} c_i  \eta_{\lambda_i}(e)= 0\] for some $m<n$. Indeed, if for any $i=2,\ldots, n-1$, $\what{f}(-\lambda_i)=0$ we discard it and  for others write  $c_i=b_i \what{f}(-\lambda_i)\neq 0$ and relabel them as $i=2, \ldots, m$, keeping $\lambda_1$ unchanged.
 The assumption  $\xi=0$ also implies $\ell_x \xi=0$ for any $x\in G$. Instead of $\xi=0$ if we start from $\ell_{x^{-1}} \xi=0$  then   through the same steps as above, we get
\[\sum_{i=1}^{m} c_i  \eta_{\lambda_i}(x)= 0.\] Thus $\sum_{i=1}^{m} c_i  \eta_{\lambda_i}(x)= 0$ for all $x\in G$.
In this way we can reduce the number of $\eta_{\lambda}$s.  A repeated application of this process  finally yields  $\eta_{\lambda_1}(x)= 0$ which was the target. Thus we have established that $T_1'$ is a well defined operator on $Y_1$.

We shall now verify that operators $T_1$ and $T_1'$ satisfy the hypothesis of  Theorem \ref{kitai}.
Clearly $(T_1')^n\phi\to 0$  as $n\to \infty$ for any $\phi\in Y_1$ because $|m_1(\lambda)|>1$ for $\lambda\in N_1$.
On the other hand as $T_1(\ell_y\varphi_\lambda)=\ell_y T_1(\varphi_\lambda)=m_1(\lambda)\ell_y\varphi_\lambda$ and on $N_2$, $|m_1(\lambda)|<1$,
$(T_1)^n\phi\to 0$  as $n\to \infty$ for any $\phi\in Y_2$.
Lastly,  $T_1T_1'(\ell_y\varphi_\lambda)=\ell_y\varphi_\lambda$ by Proposition 3.5.1 and hence  $T_1T_1'$ is identity on  $Y_1$.
Theorem \ref{kitai} now shows that  $T_1$ is hypercyclic.
\end{proof}

\begin{proposition} \label{periodic-main-result} The set of periodic points of the operator $T_1$ defined above  is  dense  in $L^p(X)$ for $2<p<\infty$. \end{proposition}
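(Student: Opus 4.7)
The plan is to produce periodic points of $T_1$ as $G$-translates of the eigenfunctions $\varphi_\lambda$ with $m_1(\lambda)$ a root of unity, and then to run a density argument for their span that parallels the one carried out for $Y_1$ and $Y_2$ in the proof of Proposition \ref{hypercyclic-main-result}. Set
\[E = \{\lambda \in \Lambda_p^\circ : m_1(\lambda)^N = 1 \text{ for some } N \in \N\}.\]
For each $\lambda \in E$, Proposition \ref{prelim-prop-mult}(ii) and the $G$-equivariance of $T_1$ imply that $\ell_y \varphi_\lambda$ is a periodic point of $T_1$ for every $y \in G$, and any finite linear combination $\sum_i a_i \ell_{y_i} \varphi_{\lambda_i}$ with each $\lambda_i \in E$ is a periodic point of $T_1$ with period the $\mathrm{lcm}$ of the individual periods. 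Hence it suffices to show that
\[Y := \mathrm{span}\{\ell_y \varphi_\lambda : y \in G,\ \lambda \in E\}\]
is dense in $L^p(X)$.

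To show $E$ is rich enough we invoke the open mapping theorem. Since $\Lambda_p^\circ$ is connected (it is of the form $\R^n + iK$ for an open convex $K \subset \R^n$) and $|m_1|$ assumes values both greater and less than $1$ on $\Lambda_p^\circ$, any path from $\lambda_1$ to $\lambda_2$ meets $\{|m_1|=1\}$. Because $m_1$ is nonconstant holomorphic, its critical set $\{dm_1=0\}$ is a proper analytic (hence thin) subvariety, so we may choose $\lambda^\ast \in \Lambda_p^\circ$ with $|m_1(\lambda^\ast)|=1$ and $dm_1(\lambda^\ast)\neq 0$. On a small connected neighborhood $V$ of $\lambda^\ast$, $m_1$ is a submersion, so by the open mapping theorem $m_1(V)$ is open in $\C$ and contains an arc $\Gamma \subset \T$ around $m_1(\lambda^\ast)$. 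Using $m_1$ as one of the local holomorphic coordinates, we may identify $V$ with $D \times W$, where $D \subset \C$ is a disc with $D \cap \T \supseteq \Gamma$ and $W \subset \C^{n-1}$ is open.

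For the density step, suppose $f \in L^{p'}(X)$ annihilates $Y$. As in the proof of Proposition \ref{hypercyclic-main-result}, the function
\[\Phi_y(\lambda) := \int_X \varphi_\lambda(y^{-1}x)\, f(x)\, dx = (f \ast \varphi_{-\lambda})(y)\]
is holomorphic in $\lambda$ on $\Lambda_p^\circ$ (by Morera's theorem applied using the local $L^p$-boundedness of $\varphi_\lambda$ from Lemma \ref{lemma-eigenfns}(e)), and vanishes on $E$. In the coordinates $(w,z) \in D \times W$ on $V$, the hypothesis reads $\Phi_y(\zeta, z)=0$ for every root of unity $\zeta \in \Gamma$ and every $z \in W$. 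Since the roots of unity are dense in $\Gamma$ and $\Phi_y$ is continuous, $\Phi_y$ vanishes on $\Gamma \times W$. For each fixed $z \in W$, the map $w \mapsto \Phi_y(w,z)$ is holomorphic on $D$ and vanishes on the arc $\Gamma$, so the one-variable identity theorem forces it to vanish identically on $D$. Hence $\Phi_y \equiv 0$ on $V$, and then on the connected set $\Lambda_p^\circ$ by the multivariable identity theorem. Taking $\lambda \in \mathfrak a^\ast$ gives $f\ast\varphi_\lambda \equiv 0$ for every $\lambda \in \mathfrak a^\ast$, and Stanton--Tomas (Theorem \ref{St-Tom79}) applied to $f \in L^{p'}(X)$ with $p' < 2$ yields $f = 0$, completing the density argument.

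The main obstacle is bridging from vanishing on the countable set $E$, which is a union of thin hypersurfaces $m_1^{-1}(\zeta)$ and is therefore itself of measure zero and nowhere dense, to vanishing on an open subset of $\Lambda_p^\circ$. The ``not thin implies identically zero'' principle recalled in Section 2 cannot be applied directly; the remedy is to isolate a regular value of $m_1$ on $\T$ so that in local submersion coordinates the problem collapses to the one-variable identity theorem, after continuity has been used to pass from the countable dense collection of roots of unity to the full arc $\Gamma$.
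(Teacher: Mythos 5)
Your proposal is correct in substance and reaches the same periodic set as the paper (your $E$ is exactly the paper's $\bigcup_{\nu\in\Q\cap I}Z_\nu$: parameters where $m_1(\lambda)$ is a root of unity), but the density argument runs along a genuinely different route at the crucial bridging step. The paper first upgrades vanishing on the rational levels $Z_\nu$ to vanishing on the whole preimage $m_1^{-1}(S)$ of the open arc set $S=\T\cap m_1(\Lambda_p^\circ)$ (Lemma \ref{rational-to-real} plus continuity), and then argues by contradiction globally: if $F_x\not\equiv 0$ then $m_1^{-1}(S)$ is thin, so $\Lambda_p^\circ\smallsetminus m_1^{-1}(S)$ is connected, so its image under $m_1$ is connected, contradicting that this image splits into nonempty parts inside and outside $\T$. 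You instead localize: pick a regular point $\lambda^\ast$ of $m_1$ on $\{|m_1|=1\}$, straighten $m_1$ into a coordinate via the holomorphic submersion chart $V\cong D\times W$, use density of roots of unity in the arc $\Gamma$ plus continuity, and finish with the one-variable identity theorem in the $w$-slice followed by the identity theorem on the connected tube $\Lambda_p^\circ$. This buys a more transparent, coordinate-level reduction (trivial in rank one) and avoids the thin-set/connectedness contradiction; the paper's argument buys the converse: it never needs a regular value or the rank theorem, only the elementary thin-set facts quoted in Section 2.

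One step in your write-up deserves an explicit justification: you choose $\lambda^\ast$ with $|m_1(\lambda^\ast)|=1$ and $dm_1(\lambda^\ast)\neq 0$ solely because the critical set is thin, but this presupposes that the level set $\{|m_1|=1\}$ is not contained in that thin set. This is true, and the fix is exactly the paper's trick: $\Lambda_p^\circ\smallsetminus\{|m_1|=1\}=\{|m_1|<1\}\sqcup\{|m_1|>1\}$ is disconnected (both pieces are nonempty, open), so $\{|m_1|=1\}$ cannot be thin, hence cannot lie inside the thin critical variety; in rank one the same point follows because the critical set is discrete while removing a discrete set cannot disconnect a planar domain. With that sentence added, your proof is complete; the remaining ingredients (eigenfunction relation from Proposition \ref{prelim-prop-mult}(ii), translation invariance, holomorphy of $\lambda\mapsto f\ast\varphi_{-\lambda}(y)$, and the Stanton--Tomas inversion for $f\in L^{p'}(X)$, $p'<2$) are used just as in the paper.
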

\begin{proof}
As there exists $\lambda_1, \lambda_2\in \Lambda_p^\circ$ such that $|m_1(\lambda_1)|<1<|m_1(\lambda_2)|$  it follows from continuity of $m_1$ that there exists  $\lambda_0\in \Lambda_p^\circ$ such that $|m_1(\lambda_0)|=1$. Let $S=\T\cap m_1(\Lambda_p^\circ)$, where $\T$ is the unit circle in the complex plane. By the open mapping theorem (see Section 2)) $m_1(\Lambda_p^\circ)$ is  an open set. Since $m_1(\lambda_0)\in S$,  $S$ is a nonempty open set of $\T$.
We note that $m_1(\Lambda_p^\circ\smallsetminus m_1^{-1}(S))$ is not connected. Indeed, it is union of two nonempty sets, one inside $\T$  and the other outside $\T$
containing $m_1(\lambda_1)$ and $m_1(\lambda_2)$ respectively. We define,
\[I=\{r\in \R \mid e^{2\pi i r}\in S\} \text { and } Z_r=\{z\in \Lambda_p^\circ \mid m_1(z)=e^{2\pi i r}\} \text{ for } r\in I.\]  Then $m_1^{-1}(S)=\cup_{r\in I} Z_r$.
We consider the following subset of $L^p(X)$:
\[Y_3=\mathrm{span } \,\, \{\ell_y \varphi_z \mid y\in G, z\in Z_\nu,  \nu\in \Q\cap I\}.\] Nonemptiness of $Y_3$ follows trivially from the fact that $m_1(\Lambda_p^\circ)$ is open in $\C$.
 If $\nu=a/b\in \Q$,  ($a, b$ relatively prime integers), then using  $T_1(\ell_y \varphi_z)=m_1(z)\ell_y \varphi_z$,  we have  for $z\in Z_\nu$,
\[T_1^b(\ell_y \varphi_z)=m_1(z)^b\ell_y \varphi_z = e^{2\pi a i}\ell_y \varphi_z=\ell_y \varphi_z.\] Thus  the elements  of $Y_3$ are periodic points of $T_1$.

It  remains to show that $Y_3$ is dense in $L^p(X)$. Suppose that a nonzero function $f\in L^{p'}(X)$ annihilates $Y_3$. That is $f\ast \varphi_{-z}(x)=0$ for all $x\in X$ and for all $z\in Z_\nu$,   $\nu\in \Q\cap I$. For a fixed $x\in X$ we define $F_x(z)=f\ast \varphi_{-z}(x)$ for $z\in \Lambda_p^\circ$. Then $F_x$ is holomorphic on $\Lambda_p^\circ$ which vanishes on $\cup_{\nu\in \Q\cap I} Z_\nu$. We claim that  $F_x$ vanishes identically on $\Lambda_p^\circ$.  For the sake of meeting a contradiction we assume that $F_x\not\equiv 0$ on $\Lambda_p^\circ$. Since $F_x$ vanishes on $\cup_{\nu\in \Q\cap I} Z_\nu$,
Lemma \ref{rational-to-real} implies that   $F_x$ vanishes  on the set $m_1^{-1}(S)=\cup_{r\in  I} Z_r$. But as we have assumed that $F_x$ is  a nonzero holomorphic function on $\Lambda_p^\circ$,  $m_1^{-1}(S)$ is a thin set in $\Lambda_p^\circ$. Therefore  by  the properties of thin sets (see Section 2) we conclude that  the set $\Lambda_p^\circ\smallsetminus m_1^{-1}(S)$ is connected. Since $m_1$ is continuous this implies that $m_1(\Lambda_p^\circ\smallsetminus m_1^{-1}(S))$ is connected,    which contradicts our early observation in this proof. Thus $F_x\equiv 0$ on $\Lambda_p$ for all $x\in X$, that is
$f\ast \varphi_\lambda\equiv 0$ on $X$ for all $\lambda\in \Lambda_p$. From this and  Theorem \ref{St-Tom79} we conclude that $f=0$, which establishes that $Y_3$ is dense.
\end{proof}
The following lemma will complete the proof above. We shall use the notation $I$ and $Z_r$ defined in the proof of the proposition above.
\begin{lemma} \label{rational-to-real} Let $I$ and $Z_r$ be as defined in the proof of Proposition {\em \ref{periodic-main-result}}. Fix an $r\in I$. Then for any  $w\in Z_r$  and  $\delta>0$,  there is a $\nu\in I\cap \Q$ and a  $z\in Z_\nu$ such that $|w-z|<\delta$.
\end{lemma}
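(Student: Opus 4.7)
The plan is to exploit the open mapping theorem (from Section 2) once more. Fix $r \in I$ and $w \in Z_r$, so $m_1(w) = e^{2\pi i r}$. I would first choose the connected open neighbourhood $U = B(w,\delta) \cap \Lambda_p^\circ$ of $w$ (note $\Lambda_p^\circ$ is the interior of the convex set $\Lambda_p$, hence convex and in particular connected, so the restriction $m_1|_U$ is still nonconstant by the identity theorem, since $m_1$ is nonconstant on $\Lambda_p^\circ$ as already argued in the paper).

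Next I would apply the open mapping theorem to $m_1|_U$: the image $m_1(U)$ is an open subset of $\C$ containing $e^{2\pi i r}$. Therefore $m_1(U)$ contains an ordinary Euclidean ball $B(e^{2\pi i r}, \varepsilon)$ in $\C$, and this ball meets the unit circle $\T$ in a genuine open arc around $e^{2\pi i r}$. Concretely, there exists $\eta > 0$ such that
\[
\{ e^{2\pi i s} : s \in (r-\eta,\, r+\eta)\} \subset B(e^{2\pi i r}, \varepsilon) \cap \T \subset m_1(U) \cap \T.
\]

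Now I would invoke density of $\Q$ in $\R$ to pick a rational $\nu \in \Q \cap (r-\eta, r+\eta)$. For this $\nu$ we have $e^{2\pi i \nu} \in m_1(U) \subset m_1(\Lambda_p^\circ)$, so $e^{2\pi i \nu} \in S$ and hence $\nu \in I \cap \Q$. Moreover $e^{2\pi i \nu} \in m_1(U)$ means there exists $z \in U$ with $m_1(z) = e^{2\pi i \nu}$, i.e. $z \in Z_\nu$, and by construction $z \in B(w,\delta)$, giving $|w-z| < \delta$.

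No step seems to present a real obstacle; the only subtlety I would double-check is justifying that $m_1|_U$ is nonconstant so that the open mapping theorem applies to the specific restriction — this is handled by the identity theorem together with connectedness of $\Lambda_p^\circ$ and nonconstancy of $m_1$ on $\Lambda_p^\circ$ already established above. Everything else is a routine combination of the open mapping theorem with density of rationals on $\T$.
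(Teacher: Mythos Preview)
Your argument is correct and is essentially the same as the paper's: apply the open mapping theorem to a small ball around $w$ inside $\Lambda_p^\circ$, observe that the image contains an open arc of $\T$ through $e^{2\pi i r}$, and use density of $\Q$ to pick $\nu$. The only cosmetic differences are that the paper shrinks the ball to lie entirely inside $\Lambda_p^\circ$ rather than intersecting with it, and it does not pause over nonconstancy of $m_1|_U$ since the version of the open mapping theorem quoted in Section~2 already gives openness of $f(U)$ for every open $U\subset\Omega$ once $f$ is nonconstant on the connected domain $\Omega=\Lambda_p^\circ$.
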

\begin{proof}
Take the open  ball $B_{\delta'}(w)\subset \Lambda_p^\circ$ where $\delta' <\delta$. Then $w\in B_{\delta'}(w)$. By open mapping theorem
$m_1(B_{\delta'}(w))$ is an open set in   $m_1(\Lambda_p^\circ)$  containing the point $m_1(w)=e^{2\pi i r}$.  So $m_1(B_{\delta'}(w))$ will contain an arc $\{e^{2\pi i s}\mid s\in (a, b)\subset I\}$ with $r\in (a, b)$. Take a $\nu\in (a, b)\cap \Q$. Then the point $e^{2\pi i \nu}$ has a pre-image $z$ in $B_{\delta'}(w)$.   That is  $m_1(z)=e^{2\pi i \nu}$, and hence $z\in Z_\nu$. Also as $z\in  B_{\delta'}(w)$, $|w-z|<\delta'<\delta$.
\end{proof}

\section{Examples and Remarks} Well known examples of Fourier multipliers are spectral multipliers and convolution with suitable Borel measures. In the light of the result proved in the previous section, we shall revisit their dynamics, which will yield some interesting corollaries. The first example also relates Theorem \ref{result-multiplier} with the previous works in this direction e.g. \cite{J-W, Pram-Sar}.

\example  The heat kernel $h_t$ on $X$ for $t >0$ is defined as a $K$-invariant function in the Harish-Chandra $L^p$-Schwartz space $C^p(X)$, $1 \leq p \leq 2$, whose spherical Fourier transform is prescribed as follows (see \cite[3.1]{Ank-Ji}), \[ \widehat{h}_t(\lambda) = e^{-t  ((\lambda, \lambda) + |{\pmb{\rho}}|^2)} \quad \text{ for all } \lambda \in \mathfrak a^\ast.\] For a fixed $t>0$, we consider the operator $Tf= f\ast h_t$,  i.e. $T=e^{-t\Delta}$ where $\Delta$ is the positive Laplace-Beltrami operator on $X$. Then  $m(\lambda)=\what{h_t}(\lambda)$. It is clear that  $T$ is not chaotic on $L^{p}(X)$ for any $1\le p\le \infty$ since $\|T\|_{L^p-L^p}=\what{h_t}(-i\gamma_p\pmb{\rho})=e^{-4t |\pmb\rho|^2/pp'} \le 1$. In general for a multiplier  given by the function $m(\lambda)$, if we define  $\theta=\inf_{\lambda\in \Lambda_p}|m(\lambda)|, \Theta=\sup_{\lambda\in \Lambda_p}|m(\lambda)|$, then it is clear from the proof of Theorem \ref{result-multiplier}, that  we can choose $z$ from the annulus: $1/\Theta<|z|<1/\theta$ where we take $1/\theta=\infty$ if $\theta=0$. Coming back to the case in hand $Tf=f\ast h_t$, we see that  $\theta=0$ and $\Theta=e^{-4t|\pmb\rho|^2/pp'} \le 1$. So we choose $z\in \C$ such that $1 \le e^{4t|\pmb\rho|^2/pp'}<|z|<\infty$. Take $z_0=a+ib$ where $a> 4|\pmb\rho|^2/pp'$ and $b\in \R$. Then $|e^{z_0t}|=e^{at}>e^{4t|\pmb\rho|^2/pp'}$. Thus we can take $z=e^{z_0t}$ and by  Theorem \ref{result-multiplier} $zT=e^{-t(\Delta-z_0)}$ is chaotic on $L^p(X), 2<p<\infty$. A continuous semigroup version of this result is proved in \cite{Pram-Sar}.

\example We continue to use the notation $\theta, \Theta$ defined in the previous example. We  consider convolution by  a nonatomic and nonnegative $K$-invariant  measure $\mu$ on  $X$ such that $\what{\mu}(-i\gamma_p\pmb\rho)<\infty$ for some $1\le p\le \infty$.  By Herz's majorizing principle (see subsection \ref{subsec-Herz}) the operator $Tf= f\ast \mu$ is an $L^p$-multiplier. We note that in this case $\theta<1$, because on $\lambda\in \mathfrak a^\ast$,  $|\what{\mu}(\lambda)|\to 0$ as $|\lambda|\to \infty$. Indeed for $\lambda\in \mathfrak a^\ast$,
\[|\what{\mu}(\lambda)|\le \int_X |\varphi_\lambda(x)| d\mu(x)\le \int_X \varphi_{0}(x) d\mu(x)\le  \int_X \varphi_{i\gamma_p\pmb\rho}(x) d\mu(x)<\infty.\]
Since, for every fixed $x\in X$, $|\varphi_\lambda(x)|\to 0$ as $|\lambda|\to \infty$, the result follows from dominated convergence theorem.

We also note that here  $\Theta=\what{\mu}(-i\gamma_p\pmb\rho)=\|T\|_{L^p-L^p}$. Thus if  $\what{\mu}(-i\gamma_p\pmb\rho)\le 1$, then   $T$ is   a contraction and hence  not chaotic.  On the other hand if $T$ is not a contraction, equivalently, if $\what{\mu}(-i\gamma_p\pmb\rho)>1$ then it is chaotic because we can choose $z=1$ as $1/\Theta<1<1/\theta$.
Precisely, we have proved the following.
\begin{corollary}
\label{result-measure}
Fix $2< p< \infty$. Let $\mu$ be a nonatomic $K$-invariant regular nonnegative Borel measure on $X$ and $T: f\mapsto f\ast \mu$.  If  $\what{\mu}(-i\gamma_p\pmb\rho)<\infty$ (equivalently $\|T\|_{L^p-L^p}<\infty$),
 then $T$ is chaotic on $L^{p}(X)$ if and only if $T$ is not a contraction.
\end{corollary}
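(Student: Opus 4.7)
The plan is to reduce the corollary to Theorem \ref{result-multiplier} applied with the choice $z=1$. The discussion preceding the statement already spells out that, for any nontrivial $L^p$-multiplier with symbol $m$, Theorem \ref{result-multiplier} supplies chaoticity of $zT$ for every $z$ in the open annulus $1/\Theta < |z| < 1/\theta$, where $\theta = \inf_{\Lambda_p} |m|$ and $\Theta = \sup_{\Lambda_p} |m|$ (with the convention $1/\theta = \infty$ if $\theta = 0$). Thus the corollary reduces to verifying that $1$ lies in this annulus precisely when $T$ is not a contraction.

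The forward direction is immediate: if $\|T\|_{L^p \to L^p} \le 1$ then every orbit $\{T^n f\}_{n \ge 0}$ is norm-bounded and hence cannot be dense in $L^p(X)$, so $T$ is not hypercyclic, let alone chaotic. For the reverse direction, I would first invoke Herz's majorizing principle (Proposition \ref{prop-Herz}) together with the nonnegativity of $\mu$ to identify $\Theta = \what{\mu}(-i\gamma_p\pmb\rho) = \|T\|_{L^p \to L^p}$, so that the non-contraction hypothesis is equivalent to $\Theta > 1$. The whole problem then boils down to showing $\theta < 1$; I would in fact establish the stronger assertion $\theta = 0$.

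This Riemann--Lebesgue statement for $\what{\mu}$ is the only genuine content beyond bookkeeping, and it is the step I expect to be the main obstacle. I would attack it by dominated convergence. For $\lambda \in \mathfrak a^\ast$, the pointwise bound $|\varphi_\lambda(x)| \le \varphi_0(x) \le \varphi_{i\gamma_p\pmb\rho}(x)$ from Lemma \ref{lemma-eigenfns}, combined with $\int_X \varphi_{i\gamma_p\pmb\rho}\, d\mu = \what{\mu}(-i\gamma_p\pmb\rho) < \infty$, supplies a $\mu$-integrable majorant. The classical pointwise decay $|\varphi_\lambda(x)| \to 0$ as $|\lambda| \to \infty$ with $\lambda \in \mathfrak a^\ast$ holds for each fixed $x \ne eK$; and since $\mu$ is nonatomic we have $\mu(\{eK\}) = 0$, so this decay holds $\mu$-almost everywhere. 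Dominated convergence then yields $\what{\mu}(\lambda) \to 0$ as $|\lambda| \to \infty$ in $\mathfrak a^\ast$, giving $\theta = 0$. With $1/\Theta < 1 < \infty = 1/\theta$, Theorem \ref{result-multiplier} applied at $z = 1$ makes $T$ itself chaotic on $L^p(X)$, completing the proof.
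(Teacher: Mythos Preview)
Your proof is correct and follows essentially the same route as the paper: Herz's principle identifies $\Theta=\what{\mu}(-i\gamma_p\pmb\rho)=\|T\|_{L^p\to L^p}$, a dominated-convergence argument on $\mathfrak a^\ast$ shows $\what{\mu}(\lambda)\to 0$ (hence $\theta<1$), and then $z=1$ lies in the annulus exactly when $\Theta>1$. Your added remark that $\varphi_\lambda(eK)=1$ for all $\lambda$, so the pointwise decay only holds for $x\neq eK$ and the nonatomic hypothesis is what makes the dominated-convergence step go through $\mu$-a.e., is a refinement the paper glosses over.
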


\begin{corollary}
\label{result-measure-2} Let  $\mu$ and $T$ be as in Corollary \ref{result-measure}  and  $2<p_2<p_1<\infty$. Suppose that $\mu$ satisfies the condition $\what{\mu}(-i\gamma_{p_1}\pmb\rho)<\infty$, so that  $T\in CO_{p_1}(X)\subset CO_{p_2}(X)$.  If $T$ is chaotic on $L^{p_2}(X)$, then $T$ is chaotic  on $L^{p_1}(X)$.
\end{corollary}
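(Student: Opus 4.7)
The plan is to reduce the statement to a monotonicity of $\|T\|_{L^p \to L^p}$ in $p$, and then establish that monotonicity from Herz's equality combined with Riesz-Thorin interpolation.

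The first step is to apply Corollary \ref{result-measure} at both exponents: for each $q \in \{p_1, p_2\}$, chaoticity of $T$ on $L^q(X)$ is equivalent to $\|T\|_{L^q \to L^q} > 1$. Hence the corollary will follow once I prove the monotonicity
\[
\|T\|_{L^{p_2} \to L^{p_2}} \le \|T\|_{L^{p_1} \to L^{p_1}},
\]
since then $\|T\|_{L^{p_2}} > 1$ propagates to $\|T\|_{L^{p_1}} > 1$.

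For this monotonicity I would extract a self-duality from Herz's principle. Proposition \ref{prop-Herz} applied to the nonnegative $K$-biinvariant measure $\mu$ gives $\|T\|_{L^p \to L^p} = \what{\mu}(-i\gamma_p\pmb{\rho})$ for every $p$ in the range where this quantity is finite. Since $\gamma_p = \gamma_{p'}$, this forces $\|T\|_{L^p \to L^p} = \|T\|_{L^{p'} \to L^{p'}}$. In particular, the hypothesis $\what{\mu}(-i\gamma_{p_1}\pmb{\rho}) < \infty$ makes $T$ a bounded operator on both $L^{p_1}(X)$ and $L^{p_1'}(X)$ with equal operator norm.

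Since $p_1 > 2$ implies $p_1' < 2 < p_2 < p_1$, the exponent $p_2$ interpolates between $p_1'$ and $p_1$: write $1/p_2 = (1-\theta)/p_1' + \theta/p_1$ for some $\theta \in (0,1)$. The Riesz-Thorin interpolation theorem applied to $T$ with endpoints $L^{p_1'}$ and $L^{p_1}$ then yields
\[
\|T\|_{L^{p_2} \to L^{p_2}} \le \|T\|_{L^{p_1'} \to L^{p_1'}}^{1-\theta} \; \|T\|_{L^{p_1} \to L^{p_1}}^{\theta} = \|T\|_{L^{p_1} \to L^{p_1}},
\]
which is the required inequality, and the proof concludes via Corollary \ref{result-measure}.

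The main point to notice (rather than a genuine obstacle) is the self-duality $\|T\|_{L^p} = \|T\|_{L^{p'}}$: it is specific to nonnegative $K$-biinvariant convolution operators through Herz's equality, and it is what makes the interpolation argument work. Without this symmetry, Riesz-Thorin between $L^2$ and $L^{p_1}$ alone would give only a bound in terms of $\|T\|_{L^2}$ and $\|T\|_{L^{p_1}}$, which does not a priori dominate $\|T\|_{L^{p_2}}$ by $\|T\|_{L^{p_1}}$ unless one first compares $\|T\|_{L^2}$ with $\|T\|_{L^{p_1}}$.
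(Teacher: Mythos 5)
Your proposal is correct, and its reduction is the same as the paper's: both pass through Corollary \ref{result-measure} at the two exponents, so that everything hinges on showing that ``not a contraction on $L^{p_2}$'' forces ``not a contraction on $L^{p_1}$''. Where you differ is in how this monotonicity is obtained. The paper argues in one line with complex analysis: by Herz's equality the norms are $\what{\mu}(-i\gamma_{p_2}\pmb\rho)$ and $\what{\mu}(-i\gamma_{p_1}\pmb\rho)$, and since $\what{\mu}$ extends holomorphically to $\Lambda_{p_1}^{\circ}$ with $-i\gamma_{p_2}\pmb\rho$ an interior point, the maximum modulus principle (together with the fact that $|\what{\mu}|$ on $\Lambda_{p_1}^{\circ}$ is dominated by $\|T\|_{L^{p_1}\to L^{p_1}}=\what{\mu}(-i\gamma_{p_1}\pmb\rho)$) gives $\what{\mu}(-i\gamma_{p_1}\pmb\rho)\ge \what{\mu}(-i\gamma_{p_2}\pmb\rho)>1$. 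You instead prove the operator-norm inequality $\|T\|_{L^{p_2}\to L^{p_2}}\le \|T\|_{L^{p_1}\to L^{p_1}}$ by Riesz--Thorin between $L^{p_1'}$ and $L^{p_1}$, using Herz's equality only to get the self-duality $\|T\|_{L^{p_1'}\to L^{p_1'}}=\|T\|_{L^{p_1}\to L^{p_1}}$ from $\gamma_{p_1}=\gamma_{p_1'}$. Your route avoids holomorphy and the maximum modulus principle entirely and is in that sense more elementary and more robust: it shows the norm monotonicity in $|1/p-1/2|$ for any operator bounded with equal norms at the dual pair $p_1,p_1'$, whereas the paper's argument exploits the analytic extension of the spherical transform of a positive measure on the tube $\Lambda_{p_1}$, which fits the multiplier framework used throughout and gives monotonicity of $\gamma\mapsto\what{\mu}(-i\gamma\pmb\rho)$ directly. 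One small point common to both arguments: to invoke Corollary \ref{result-measure} at $p_2$ you need $\what{\mu}(-i\gamma_{p_2}\pmb\rho)<\infty$; this does follow (e.g.\ from $T\in CO_{p_2}(X)$ and Herz's equality, or from your interpolation bound), and is worth a sentence in a final write-up.
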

\begin{proof} 
Since $T$ is chaotic on $L^{p_2}(X)$, by Corollary \ref{result-measure} $\what{\mu}(-i\gamma_{p_2}\pmb\rho)>1$. Therefore by the maximum modulus principle (see Section 2) $\what{\mu}(-i\gamma_{p_1}\pmb\rho)>1$ and hence again by Corollary \ref{result-measure}, $T$ is chaotic on $L^{p_1}(X)$.
\end{proof}

Instead of nonnegative $K$-biinvariant measure we can  take a $K$-invariant complex measure, in particular a $K$-invariant measurable function $g$ on $X$ such that
\[1<|\what{g}(-i\gamma_p\pmb\rho)|<\what{|g|}(-i\gamma_p\pmb\rho)<\infty.\] Then the convolution operator $T:L^{p}(X)\to L^{p}(X), 2<p<\infty$ given by $f\mapsto f\ast g$  is bounded and is chaotic on $L^{p}(X)$ by similar argument.

\example Fix a $p$ in the range $2<p<\infty$. From the proof of Theorem \ref{result-multiplier}, it is clear that if an $L^p$-multiplier  $T$ given by the function  $m(\lambda)$ is such that there exists $\lambda_1, \lambda_2\in \Lambda_p^\circ$ with $|m(\lambda_1)|<1<|m(\lambda_2)|$, then $T$ is itself chaotic on $L^p(X)$. For an element  $z\in \C$ from  the complement of the $L^p$-spectrum of the Laplace-Beltrami operator $\Delta$, we  consider the resolvent $T=(\Delta-z)^{-1}$. It is easy to verify that if $z$ is sufficiently close to the spectrum (so that there are $\lambda_1,\lambda_2\in \Lambda_p^\circ$ with  $|(\lambda_1, \lambda_1)+|\pmb\rho|^2-z|<1$   and $|(\lambda_2, \lambda_2)+|\pmb\rho|^2-z|>1$) then $T$ is chaotic. A description of the $L^p$-spectrum of $\Delta$ can be found in  \cite[3.4]{Pram-Sar} and the references therein.

\section{Sharpness of the range of $p$} Aim of this section is to  show that the range of $p$ in Theorem \ref{result-multiplier} is  sharp. As a preparation we gather and prove some lemmas. The first one is  from \cite[Proposition 5.1]{GKPA}.
\begin{lemma} \label{not-hyper-1} Let $T$ be a hypercyclic operator  on a Banach space $\B$ and $T^\ast$ be the dual operator of $T$ acting on $\B^\ast$. Then
\begin{enumerate}
\item[(i)] for any nonzero $\phi\in \B^\ast$ the orbit
$\{(T^\ast)^n\phi \mid n\ge 0\}$ is unbounded,
\item[(ii)] the point spectrum of $T^\ast$ is empty.
\end{enumerate}
\end{lemma}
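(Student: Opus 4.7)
The plan is to exploit the duality identity $\langle (T^\ast)^n\phi, x\rangle = \langle \phi, T^n x\rangle$ together with the defining property of a hypercyclic vector. I will prove (i) by contradiction: assume the orbit $\{(T^\ast)^n\phi\}$ is bounded in $\B^\ast$, say by a constant $M$. Pick any hypercyclic vector $x$ for $T$, so that $\{T^n x \mid n\ge 0\}$ is dense in $\B$. Since $\phi$ is a nonzero continuous linear functional, it is surjective from $\B$ onto $\C$ and open, and hence it sends the dense set $\{T^n x\}$ to a dense, in particular unbounded, subset of $\C$. But the duality identity gives $|\phi(T^n x)| \le \|(T^\ast)^n\phi\|\,\|x\| \le M\|x\|$ for every $n$, which contradicts the unboundedness just obtained.

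For (ii) I will deduce that the point spectrum of $T^\ast$ is empty from essentially the same idea. Suppose $T^\ast\phi = \lambda\phi$ for some nonzero $\phi \in \B^\ast$, so that $(T^\ast)^n\phi = \lambda^n\phi$. Since $\ker\phi$ is a proper closed subspace of $\B$ and the set of hypercyclic vectors for $T$ is dense in $\B$ (for instance because $T^n x$ is hypercyclic whenever $x$ is, and $\B$ is infinite-dimensional), there exists a hypercyclic $x$ with $\phi(x)\ne 0$. As in (i), the set $\{\phi(T^n x)\} = \{\lambda^n \phi(x)\}$ would then have to be dense in $\C$. However, its moduli $|\lambda|^n|\phi(x)|$ either tend to $0$, tend to $\infty$, or remain constant, according to whether $|\lambda|<1$, $|\lambda|>1$, or $|\lambda|=1$; in each case the set $\{\lambda^n\phi(x)\}$ is a discrete spiral or circle, and is therefore nowhere close to being dense in $\C$. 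This contradiction establishes (ii).

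The only point requiring any care is the assertion that a nonzero continuous linear functional $\phi$ on a Banach space sends dense sets to dense sets. I can invoke surjectivity and the open mapping theorem, or proceed elementarily: given $\zeta\in\C$, pick $y\in \B$ with $\phi(y)=\zeta$, approximate $y$ by some $T^n x$ in $\B$-norm, and use continuity of $\phi$ to approximate $\zeta$ by $\phi(T^n x)$. I foresee no genuine obstacle — the argument is a direct and classical application of the definition of hypercyclicity. In fact (ii) can alternatively be derived from (i) in the regime $|\lambda|\le 1$, where $\|(T^\ast)^n\phi\| = |\lambda|^n\|\phi\|$ is bounded, leaving only the case $|\lambda|>1$ to the sparse-spiral argument above.
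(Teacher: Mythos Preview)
Your proof is correct and follows the standard argument. Note, however, that the paper does not actually supply its own proof of this lemma: it merely quotes the result from \cite[Proposition 5.1]{GKPA} (Grosse-Erdmann and Peris Manguillot, \emph{Linear chaos}). So there is no ``paper's proof'' to compare against; what you have written is essentially the textbook argument.

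Two small remarks on your write-up. First, in part (ii) you do not need to arrange $\phi(x)\neq 0$: for \emph{any} hypercyclic vector $x$ the set $\{\phi(T^n x)\}=\{\lambda^n\phi(x)\}$ must be dense in $\C$, and this already fails when $\phi(x)=0$ (the set reduces to $\{0\}$). This lets you bypass the digression about density of hypercyclic vectors. Second, your justification that hypercyclic vectors are dense is correct but slightly terse; the point is that if $x$ is hypercyclic then each $T^m x$ is hypercyclic (its orbit is a cofinite subset of a dense set in a space without isolated points), and $\{T^m x\}_{m\ge 0}$ is itself dense. Neither remark affects the validity of your argument.
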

An easy  adaptation of \cite[Theorem 8.1]{Helga-Abel} using  the fact  that $\varphi_\lambda\in L^{p'}(X)$ for $\lambda\in \Lambda_q$ (see Lemma \ref{lemma-eigenfns} (e)), proves the following lemma. See also \cite{HRSS, Sar-Sita-chennai}.
\begin{lemma}\label{HFT-exist}
For $1\le p< q<2$ and $f\in L^p(X)$,  there exits a subset $B\subset K/M$ of full measure  such that for each $k\in B$,  $\wtilde{f}(\lambda, k)=\int_X f(x) e^{(i\lambda-\pmb\rho) H(x^{-1}k)} dx$ exists for all $\lambda\in \Lambda_q$ and is holomorphic on $\Lambda_q$. The set $B$ may depend on the function $f$ but does not depend on $\lambda\in \Lambda_q$.
\end{lemma}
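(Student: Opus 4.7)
The plan is to prove existence via Fubini--Tonelli applied to the nonnegative integrand, using H\"older's inequality and the $L^{p'}$-estimate for elementary spherical functions from Lemma \ref{lemma-eigenfns}(e). The key issue is to arrange a single null set that works for \emph{all} $\lambda\in\Lambda_q$ simultaneously, which I would handle by a convexity/polytope domination on compact subsets of $\Lambda_q$.

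For a single fixed $\lambda\in\Lambda_q$, I would apply Fubini--Tonelli and use the right $M$-invariance of $k\mapsto H(x^{-1}k)$ to compute
\[\int_{K/M}\int_X |f(x)|\,\bigl|e^{(i\lambda-\pmb\rho)H(x^{-1}k)}\bigr|\,dx\,dk \;=\; \int_X |f(x)|\int_K e^{-(\Im\lambda+\pmb\rho)(H(x^{-1}k))}dk\,dx \;=\;\int_X |f(x)|\,\varphi_{-i\Im\lambda}(x)\,dx.\]
Because $1\le p<q<2$ gives $\gamma_q<\gamma_p=\gamma_{p'}$, the hypothesis $\lambda\in\Lambda_q$, that is $|w(\Im\lambda)(H)|\le\gamma_q\pmb\rho(H)$ for all $H\in\mathfrak a_+,\,w\in W$, forces $-i\Im\lambda\in\Lambda_{p'}^\circ$. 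Lemma \ref{lemma-eigenfns}(e) (or (f) when $p=1$) then gives $\varphi_{-i\Im\lambda}\in L^{p'}(X)$, so H\"older bounds the displayed double integral by $\|f\|_p\|\varphi_{-i\Im\lambda}\|_{p'}<\infty$. Fubini then yields absolute convergence in $x$ for a.e.\ $k$, but with an exceptional set that \emph{a priori} depends on $\lambda$.

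To remove the $\lambda$-dependence, I would exhaust $\Lambda_q$ by an increasing sequence of compact sets $\Omega_n\subset\Lambda_q$. For each $n$, the compact set $\{\Im\lambda:\lambda\in\Omega_n\}\subset\mathbb R^n$ lies in the open convex region $\{\eta:|w\eta(H)|<\gamma_{p'}\pmb\rho(H)\}$, so I can enclose it in the convex hull of finitely many vectors $\mu_1^{(n)},\dots,\mu_{N_n}^{(n)}$ still satisfying $|w\mu_i^{(n)}(H)|<\gamma_{p'}\pmb\rho(H)$. Since $\eta\mapsto e^{-\eta(H)}$ is convex for each fixed $H$, for every $\lambda\in\Omega_n$,
\[\bigl|e^{(i\lambda-\pmb\rho)H(x^{-1}k)}\bigr| \;=\; e^{-(\Im\lambda+\pmb\rho)(H(x^{-1}k))} \;\le\;\sum_{i=1}^{N_n} e^{-(\mu_i^{(n)}+\pmb\rho)(H(x^{-1}k))}.\]
Integrating this domination against $|f(x)|\,dx\,dk$ and applying the first step to each vertex gives the finite bound $\sum_i \|f\|_p\|\varphi_{-i\mu_i^{(n)}}\|_{p'}<\infty$. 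Fubini now produces one null set $E_n\subset K/M$ such that for every $k\notin E_n$ the dominating function is $x$-integrable, hence $\wtilde f(\lambda,k)$ converges absolutely for \emph{every} $\lambda\in\Omega_n$. Setting $B=K/M\smallsetminus\bigcup_n E_n$ gives the desired full-measure set independent of $\lambda$.

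Holomorphicity on $\Lambda_q$ is then routine: fix $k\in B$, note that $\lambda\mapsto e^{(i\lambda-\pmb\rho)H(x^{-1}k)}$ is entire for each $x$, and on each $\Omega_n$ the same polytope domination controls the integrand uniformly in $\lambda$ by an $L^1_x$ function. Differentiating under the integral sign (or applying Morera plus Fubini to closed triangles in $\Omega_n^\circ$, using Cauchy's estimate on a slightly larger compact $\Omega_{n+1}$ to bound the difference quotients) shows $\lambda\mapsto\wtilde f(\lambda,k)$ is holomorphic on each $\Omega_n$ and therefore on $\Lambda_q$. The main obstacle is the middle step: a naive Fubini only gives a null set per $\lambda$, and even a countable dense set of $\lambda$'s does not suffice, so the convex-hull trick reducing the uncountable family $\{e^{-(\Im\lambda+\pmb\rho)H(x^{-1}k)}:\lambda\in\Omega_n\}$ to a finite sum of spherical functions is what makes the argument work.
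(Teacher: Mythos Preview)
Your argument is correct and is precisely the ``easy adaptation'' the paper alludes to: the paper does not write out a proof but merely cites \cite{Helga-Abel} together with the fact that $\varphi_\lambda\in L^{p'}(X)$ for $\lambda\in\Lambda_q$, which is exactly the H\"older-plus-Fubini mechanism you spell out. One minor streamlining: you do not need to exhaust $\Lambda_q$ by compacts, because the set $\{\Im\lambda:\lambda\in\Lambda_q\}$ is itself a compact convex polytope (the convex hull of the Weyl orbit $W\cdot(\gamma_q\pmb\rho)$) sitting inside the open polytope $\{\eta:|w\eta(H)|<\gamma_{p'}\pmb\rho(H)\}$, so a \emph{single} finite vertex set suffices to dominate $e^{-(\Im\lambda+\pmb\rho)(H(x^{-1}k))}$ uniformly over all $\lambda\in\Lambda_q$, producing one null set directly.
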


We also have the following results.
\begin{lemma} \label{restriction} For $f\in L^p(X), 1\le p<2$ and $\lambda\in \mathfrak a^\ast$, $\|\wtilde{f}(\lambda, \cdot)\|_{L^2(K/M)}\le C \|f\|_p$  for some constant $C>0$.
\end{lemma}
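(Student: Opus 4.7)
My plan is to reduce the inequality to a Plancherel-type identity and then control it by a Kunze--Stein convolution estimate on $G$. I sketch the three main steps.

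First, for $f\in C_c^\infty(X)$ and $\lambda\in \mathfrak a^*$, I would establish the identity
\[\int_{K/M} |\wtilde{f}(\lambda,k)|^2\, dk \;=\; \int_X\int_X f(x)\overline{f(y)}\,\varphi_\lambda(y^{-1}x)\, dx\, dy.\]
This is a Fubini interchange applied to $\int_{K/M}\wtilde{f}(\lambda,k)\overline{\wtilde{f}(\lambda,k)}\,dk$, using the classical addition formula
\[\int_{K/M} e^{(i\lambda-\pmb\rho)H(x^{-1}k)}\, e^{(-i\lambda-\pmb\rho)H(y^{-1}k)}\, dk \;=\; \varphi_\lambda(y^{-1}x), \qquad \lambda\in\mathfrak a^*,\]
which is a consequence of the integral representation of $\varphi_\lambda$ in Lemma \ref{lemma-eigenfns} together with the $K$-biinvariance $\varphi_\lambda(g)=\varphi_\lambda(g^{-1})$ and can be verified directly by manipulation of the Iwasawa cocycle.

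Second, the right-hand side of the identity equals $\langle f, \bar{f}\ast\varphi_\lambda\rangle$, and $\varphi_\lambda$ is real for $\lambda\in\mathfrak a^*$, so $\|\bar{f}\ast\varphi_\lambda\|_{p'} = \|f\ast\varphi_\lambda\|_{p'}$. Hence H\"older's inequality yields
\[\|\wtilde{f}(\lambda,\cdot)\|_{L^2(K/M)}^2 \;\le\; \|f\|_p\,\|f\ast\varphi_\lambda\|_{p'}.\]
Note that since $|\varphi_\lambda|\le \varphi_0$ pointwise for real $\lambda$, one can, if desired, replace $\varphi_\lambda$ by $\varphi_0$ in this majorization to obtain a constant independent of $\lambda$.

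Third, I would control $\|f\ast\varphi_\lambda\|_{p'}$ by invoking the Kunze--Stein phenomenon on semisimple Lie groups with finite centre, in the form
\[\|h\ast g\|_{L^{p'}(G)} \;\le\; C_p\,\|h\|_{L^p(G)}\,\|g\|_{L^{p'}(G)}, \qquad 1\le p<2,\]
which is dual to the classical Kunze--Stein inequality $L^p(G)\ast L^p(G)\subseteq L^p(G)$ of Cowling, Haagerup and Herz. Since $\mathfrak a^*\subset\Lambda_{p'}^\circ$ for every $p'>2$, Lemma \ref{lemma-eigenfns}(e) yields $\varphi_\lambda\in L^{p'}(X)$, so applying the convolution inequality with $g=\varphi_\lambda$ gives $\|f\ast\varphi_\lambda\|_{p'}\le C_p\|\varphi_\lambda\|_{p'}\|f\|_p$. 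Combining with the previous displays, and extending from $C_c^\infty(X)$ to $L^p(X)$ by density, completes the proof.

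The principal obstacle is the third step: ordinary Young's inequality on $G$ would only furnish the required convolution bound for $p\in[1,4/3)$, because $\varphi_\lambda$ (for real $\lambda$) fails to lie in $L^s(X)$ for any $s<2$; the full range $1\le p<2$ is accessible only through the Kunze--Stein phenomenon, which is a genuinely non-Euclidean feature of semisimple groups. The endpoint $p=2$ is unreachable by this approach, consistent with the strict inequality $p<2$ in the statement.
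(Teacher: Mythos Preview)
Your overall strategy coincides with the paper's: both compute $\|\wtilde f(\lambda,\cdot)\|_{L^2(K/M)}^2$ as $\int_X\overline{f(x)}\,(f\ast\varphi_\lambda)(x)\,dx$ (up to conjugation bookkeeping), apply H\"older, and then need the bound $\|f\ast\varphi_\lambda\|_{p'}\le C\|f\|_p$. The paper obtains this last estimate by citing \cite[Theorem 2.2]{Cow-Meda-Guil-1}, which says precisely that convolution with $\varphi_\lambda$ (for $\lambda\in\mathfrak a^*$) maps $L^p(X)$ to $L^{p'}(X)$ boundedly for $1\le p<2$.

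Your step 3, however, has a genuine gap: the bilinear inequality $\|h\ast g\|_{p'}\le C_p\|h\|_p\|g\|_{p'}$ (equivalently, by the duality you invoke, $L^p(G)\ast L^p(G)\subset L^p(G)$) is \emph{false} for $1<p<2$ on a noncompact semisimple group. The Kunze--Stein phenomenon is $L^p\ast L^2\subset L^2$; dualizing gives $L^2\ast L^2\subset L^{p'}$, and interpolating with Young's $L^1\ast L^{p'}\subset L^{p'}$ yields only $L^p\ast L^{pp'/2}\subset L^{p'}$. Since $pp'/2<p'$ for $p<2$, this does not give $L^p\ast L^{p'}\subset L^{p'}$. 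To see that the inclusion $L^p\ast L^p\subset L^p$ actually fails, recall Herz's principle (Proposition~\ref{prop-Herz}): the $L^p\to L^p$ norm of convolution by a nonnegative $K$-biinvariant $h$ equals $\widehat h(-i\gamma_p\pmb\rho)=\int_G h\,\varphi_{i\gamma_p\pmb\rho}$, and since $\varphi_{i\gamma_p\pmb\rho}\notin L^{p'}$ (boundary of $\Lambda_p$) this integral can be made infinite with $h\in L^p$. Your argument is easily repaired: $\varphi_\lambda$ lies not merely in $L^{p'}$ but in $L^{2+\varepsilon}$ for every $\varepsilon>0$ (Lemma~\ref{lemma-eigenfns}(e)), hence in $L^{pp'/2}$, and the interpolated Kunze--Stein bound then applies. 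This is essentially what the Cowling--Giulini--Meda citation encodes.

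A minor point: $\varphi_\lambda$ is not real-valued for real $\lambda$ unless $-\mathrm{Id}\in W$; one has $\overline{\varphi_\lambda}=\varphi_{-\lambda}$, and $\varphi_{-\lambda}=\varphi_\lambda$ only when $-\lambda\in W\lambda$. This does not affect the argument, since $|\varphi_\lambda|\le\varphi_0$ already controls $\|\bar f\ast\varphi_\lambda\|_{p'}$, and in any case the paper's bookkeeping avoids the issue entirely.
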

\begin{proof}
Temporarily using the notation $e_{\lambda, k}(x)= e^{(i\lambda-\pmb\rho) (H(x^{-1}k))}$ we have,
\begin{eqnarray*} \int_{K/M} |\wtilde{f}(\lambda, k)|^2 dk &=& \int_{K/M} \overline{\wtilde{f}(\lambda, k)}\,\, \wtilde{f}(\lambda, k) dk\\
&=& \int_{K/M} \int_X \overline{f(x)} \overline{e_{\lambda, k}(x)} dx \wtilde{f}(\lambda, k) dk\\
&=& \int_X \overline{f(x)} \int_{K/M}  \overline{e_{\lambda, k}(x)}  \wtilde{f}(\lambda, k) dk dx\\
&=& \int_X \overline{f(x)} f\ast \varphi_\lambda(x) dx\\
&\le & \|f\|_p\,\, \|f\ast \varphi_\lambda\|_{p'},
\end{eqnarray*} where in the last step we have used  H\"older's inequality.
We recall that for $\lambda\in \mathfrak a^\ast$,   $\varphi_\lambda\in L^{2+\epsilon}(X)$ for any $\epsilon>0$ (see Lemma \ref{lemma-eigenfns} (e)). This implies that  the operator $f\mapsto f\ast \varphi_\lambda$ is bounded from $L^p(X)$ to $L^{p'}(X)$ for any $1 \le p<2<p' \le\infty$ (see \cite[Theorem 2.2]{Cow-Meda-Guil-1}). That is $\|f\ast \varphi_\lambda\|_{p'}\le C\|f\|_p$ for $\lambda\in \mathfrak a^\ast$ for some constant $C>0$ and $1\le p<2$.
Therefore we have $\int_{K/M} |\wtilde{f}(\lambda, k)|^2 dk \le  C \|f\|_p^2$, which is the assertion.
\end{proof}
\begin{lemma} \label{multiplier-factors} For $1\le p<q<2$, let $T$ be an $L^p$-multiplier given by the function $m(\lambda)$ and $f\in L^p(X)$. Then there exits a subset $B\subset K/M$ of full measure  such that for each $k\in B$ and for  $\lambda\in \Lambda_q$,  $\wtilde{Tf}(\lambda, k)=m(\lambda) \wtilde{f}(\lambda, k)$.
\end{lemma}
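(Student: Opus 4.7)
The plan is to start from the multiplier identity \eqref{multiplier-defn}, which is guaranteed for $f \in C_c^{\infty}(X)$ at real spectral parameter, and to extend it in two stages: first to $f \in L^p(X)$ at real $\lambda \in \mathfrak a^{\ast}$ by an $L^2(K/M)$-density argument, and then to $\lambda \in \Lambda_q$ by holomorphy in the spectral variable.

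First, I would pick $(f_n) \subset C_c^{\infty}(X)$ with $f_n \to f$ in $L^p(X)$; since $T \in CO_p(X)$ is bounded, also $Tf_n \to Tf$ in $L^p(X)$. For each fixed $\lambda \in \mathfrak a^{\ast}$, Lemma \ref{restriction} applied separately to $f_n - f$ and to $Tf_n - Tf$ yields
\[ \wtilde{f_n}(\lambda, \cdot) \to \wtilde{f}(\lambda, \cdot), \qquad \wtilde{Tf_n}(\lambda, \cdot) \to \wtilde{Tf}(\lambda, \cdot) \]
in $L^2(K/M)$. Each $f_n$ satisfies $\wtilde{Tf_n}(\lambda, \cdot) = m(\lambda)\wtilde{f_n}(\lambda, \cdot)$ by \eqref{multiplier-defn}. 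Since $m(\lambda)$ is a scalar and $\lambda$ is fixed, passing to the $L^2(K/M)$-limit produces $\wtilde{Tf}(\lambda, \cdot) = m(\lambda)\wtilde{f}(\lambda, \cdot)$ as $L^2$-functions, hence pointwise on a full-measure subset $E_\lambda \subset K/M$.

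To coordinate these sets as $\lambda$ varies, I would fix a countable dense $D \subset \mathfrak a^{\ast}$ and put $E = \bigcap_{\lambda \in D} E_\lambda$, still of full measure. Lemma \ref{HFT-exist} applied to $f$ and to $Tf \in L^p(X)$ furnishes full-measure sets $B_0, B_0' \subset K/M$ on which $\wtilde{f}(\cdot, k)$ and $\wtilde{Tf}(\cdot, k)$ are holomorphic on $\Lambda_q^{\circ}$; set $B = E \cap B_0 \cap B_0'$. For each $k \in B$, the two functions $\wtilde{Tf}(\cdot, k)$ and $m(\cdot)\wtilde{f}(\cdot, k)$ are holomorphic on the connected (indeed convex) open set $\Lambda_q^{\circ} \subset \C^n$, using also that $p < q$ forces $\gamma_q < \gamma_p$ and hence $\Lambda_q \subset \Lambda_p^{\circ}$, the domain of holomorphy of $m$. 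These two holomorphic functions agree on $D$, hence on all of $\mathfrak a^{\ast} \subset \Lambda_q^{\circ}$ by continuity. The identity principle for holomorphic functions of several complex variables (a holomorphic function on a connected open subset of $\C^n$ that vanishes on an open piece of the real form $\R^n$ vanishes identically) then forces agreement throughout $\Lambda_q^{\circ}$, and continuity extends it to $\Lambda_q$.

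The main technical obstacle is arranging a single full-measure $B \subset K/M$ that works simultaneously for every $\lambda \in \Lambda_q$. The trick is to pass through the Hilbert-space bound of Lemma \ref{restriction} rather than any direct pointwise approximation (the latter would force $\lambda$-dependent subsequences), and then to reduce the uncountably many real parameters to a countable dense $D$ at the cost of excluding only countably many null sets; once this coordination is done, the extension from $\mathfrak a^{\ast}$ to $\Lambda_q$ is a clean appeal to holomorphy together with the inclusion $\Lambda_q \subset \Lambda_p^{\circ}$.
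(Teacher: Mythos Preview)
Your proof is correct and follows essentially the same route as the paper: approximate $f$ by $C_c^\infty$ functions, use Lemma~\ref{restriction} to pass the identity $\wtilde{Tf_n}(\lambda,\cdot)=m(\lambda)\wtilde{f_n}(\lambda,\cdot)$ to the limit at real $\lambda$, and then invoke Lemma~\ref{HFT-exist} and holomorphy in $\lambda$ to extend to $\Lambda_q$. The only notable difference is in bookkeeping: the paper passes to subsequences to get a.e.\ pointwise convergence in $k$ and is rather terse about how a single full-measure $B$ is obtained (a Fubini-type step is implicit), whereas you work directly with $L^2(K/M)$-convergence and explicitly intersect the exceptional sets over a countable dense $D\subset\mathfrak a^\ast$ before extending by continuity and the identity principle; your version makes the coordination of null sets more transparent.
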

\begin{proof}
 Using the denseness of $C_c^\infty(X)$ in $L^p(X)$, we find a sequence $f_n\in C_c^\infty(X)$ which converges to $f$ in $L^p(X)$. Then passing to a subsequence $f_{n_i}$ if necessary, we have  by Lemma \ref{restriction} \[\wtilde{f_{n_i}}(\lambda, k)\to \wtilde{f}(\lambda, k)\] for every fixed $\lambda\in \mathfrak a^\ast$ for almost every $k\in K/M$.
 We also have $Tf_{n_i}\to Tf$ in $L^p$ and hence for a finer subsequence $\wtilde{Tf_{n_{i_k}}}(\lambda, k)\to  \wtilde{Tf}(\lambda, k)$ for  every fixed $\lambda\in \mathfrak a^\ast$ for almost every $k\in K/M$. By definition $\wtilde{Tf_{n_{i_k}}}(\lambda, k)=m(\lambda)\wtilde{f_{n_{i_k}}}(\lambda, k)$ for those $k\in K/M$ and  thus for every fixed $\lambda\in \mathfrak a^\ast$, $\wtilde{Tf_{n_{i_k}}}(\lambda, k)$ converges to $m(\lambda)\wtilde{f}(\lambda, k)$,  for almost every $k\in K/M$. This establishes that for every fixed $\lambda\in \mathfrak a^\ast$, $\wtilde{Tf}(\lambda, k)=m(\lambda)\wtilde{f}(\lambda, k)$,  for almost every $k\in K/M$. We note that we have a set $B\subset K/M$ of full measure in $K/M$, such that for every fixed  $k\in B$,  both $\lambda\mapsto \wtilde{f}(\lambda, k)$  and $\lambda\mapsto \wtilde{Tf}(\lambda, k)$ are holomorphic on $\Lambda_q$. Therefore the equality $\wtilde{Tf}(\lambda, k)=m(\lambda)\wtilde{f}(\lambda, k)$ extends to all $\lambda\in \Lambda_q$ and $k\in B$.
\end{proof}
We are  now ready to show the sharpness of the range of $p$.
\begin{proposition} Fix $1\le p< 2$. Let $T:L^p(X)\to L^p(X)$ be a nontrivial  $L^p$-multiplier. Then $T$ is neither hypercyclic nor it has any periodic point.
\end{proposition}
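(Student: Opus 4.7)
The plan is to treat the two claims separately, since each exploits a different ingredient of the paper. For the failure of hypercyclicity I will invoke Lemma~\ref{not-hyper-1}(ii): if $T$ were hypercyclic on $L^p(X)$, its Banach adjoint $T^{\ast}\colon L^{p'}(X)\to L^{p'}(X)$ would have empty point spectrum. Since $T\in CO_p(X)$ with $1\le p<2$ gives $T^{\ast}\in CO_{p'}(X)$ with $p'>2$, I will read Proposition~\ref{prelim-prop-mult} with $T^{\ast}$ playing the role of ``$T$'' there: the symbol of $T^{\ast}$ is the holomorphic extension of $\overline{m(\bar\lambda)}$ to $\Lambda_{p'}^\circ=\Lambda_p^\circ$, and
\[T^{\ast}\varphi_\lambda=\overline{m(\bar\lambda)}\,\varphi_\lambda\qquad\text{for every }\lambda\in\Lambda_p^\circ.\]
Because $\varphi_\lambda\in L^{p'}(X)\setminus\{0\}$ for $\lambda\in\Lambda_{p'}^\circ$ by Lemma~\ref{lemma-eigenfns}(e), each $\overline{m(\bar\lambda)}$ lies in the point spectrum of $T^{\ast}$, contradicting Lemma~\ref{not-hyper-1}(ii).

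For the absence of (nontrivial) periodic points I argue by contradiction: suppose $f\in L^p(X)$ is nonzero and $T^n f=f$ for some $n\in\N$. Fix any $q$ with $p<q<2$. By Lemma~\ref{HFT-exist} there is a full-measure subset $B\subset K/M$ such that both $\wtilde{f}(\,\cdot\,,k)$ and $\wtilde{T^n f}(\,\cdot\,,k)$ exist and are holomorphic on $\Lambda_q$ for every $k\in B$. Iterating Lemma~\ref{multiplier-factors} (shrinking $B$ at each step) yields
\[\wtilde{T^n f}(\lambda,k)=m(\lambda)^n\,\wtilde{f}(\lambda,k)\qquad(\lambda\in\Lambda_q,\ k\in B),\]
so the hypothesis $T^n f=f$ forces $\bigl(m(\lambda)^n-1\bigr)\wtilde{f}(\lambda,k)=0$ on $\Lambda_q\times B$.

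Now $m$ is nonconstant and holomorphic on the connected open set $\Lambda_p^\circ\supseteq\Lambda_q^\circ$; if $m^n\equiv 1$ on $\Lambda_p^\circ$ then $m$ would take values in the finite set of $n$-th roots of unity and, being continuous, would have to be constant. Hence $m^n-1$ is a nonzero holomorphic function on $\Lambda_p^\circ$ whose zero set is thin in $\Lambda_q^\circ$, so the complement is open and dense. On that dense open set $\wtilde f(\,\cdot\,,k)$ vanishes, and holomorphy upgrades this to $\wtilde f(\,\cdot\,,k)\equiv 0$ on $\Lambda_q$, in particular on $\mathfrak a^{\ast}$, for almost every $k\in K/M$. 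Injectivity of the Helgason Fourier transform on $L^p(X)$ then yields $f=0$, the desired contradiction.

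The main technical point to watch is the identity $\wtilde{T^n f}(\lambda,k)=m(\lambda)^n\,\wtilde f(\lambda,k)$ on the complex tube $\Lambda_q$ rather than merely on $\mathfrak a^{\ast}$; this is precisely what Lemmas~\ref{HFT-exist} and~\ref{multiplier-factors} were prepared for, and once holomorphy in $\lambda$ is in hand the thin-set properties recorded in Section~\ref{generalities} finish the job. The hypercyclicity half is comparatively short: its content is essentially the observation that the elementary spherical functions $\varphi_\lambda$ supply a plentiful family of eigenvectors for $T^{\ast}$ in $L^{p'}(X)$ precisely when $p<2$.
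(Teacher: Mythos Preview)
Your proof is correct and follows essentially the same path as the paper's: the hypercyclicity half is identical, and for periodic points both arguments arrive at $(m(\lambda)^n-1)\,\wtilde f(\lambda,k)=0$ on $\Lambda_q$ via Lemmas~\ref{HFT-exist} and~\ref{multiplier-factors}. The only cosmetic difference is which factor you force to vanish identically---you observe that $m^n-1$ has thin zero set and deduce $\wtilde f\equiv 0$ (hence $f=0$ by injectivity of the Helgason transform), whereas the paper observes that $\wtilde g(\cdot,k)$ has thin zero set and deduces $m^n\equiv 1$, contradicting the open mapping theorem.
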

\begin{proof}
We recall that  every $\varphi_\lambda$ with $\lambda\in \Lambda_p^\circ$ is an eigenfunction  of $T^\ast:L^{p'}(X)\to  L^{p'}(X)$. (See Lemma \ref{lemma-eigenfns} (e) and Proposition \ref{prelim-prop-mult}.) Therefore by Lemma \ref{not-hyper-1} (ii), $T$ is not hypercyclic.

We suppose that the multiplier $T$ is given by the function $m(\lambda)$. We fix a $q\in (p, 2)$. If for a nonzero function  $g\in L^p(X)$,  $T^n g=g$  for some $ n\in \N, n>0$, then by Lemma \ref{HFT-exist} and Lemma \ref{multiplier-factors}, there exits a subset $B\subset K/M$ of full measure  such that for each $k\in B$, $(m(\lambda)^n-1)\wtilde{g}(\lambda, k)=0$ for $\lambda\in \Lambda_q$. Since $\wtilde{g}(\lambda, k)$, for $k\in B$ is holomorphic on $\Lambda_q$ it can be zero on a thin set which has $2n$-dimensional Lebesgue measure zero. Thus $m(\lambda)^n =1$ on $\Lambda_q$, that is $|m(\lambda)|=1$. This is not possible as $m(\lambda)$ is holomorphic and hence an open map.
\end{proof}

\begin{proposition}  Let $T:L^2(X)\to L^2(X)$ be a nontrivial  $L^2$-multiplier. Then $T$ is not hypercyclic and hence not chaotic.
\end{proposition}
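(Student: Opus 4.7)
The plan is to exploit that on $L^2(X)$ a multiplier $T$ is unitarily equivalent, via the Helgason Fourier transform, to the multiplication operator $M_{m}$ with symbol $m(\lambda)$ acting on the Plancherel space $L^2(\mathfrak a^\ast_+\times K/M, |\hc(\lambda)|^{-2} d\lambda\, dk)$. Since $M_m$ is a normal operator on a Hilbert space, I will show that no such operator admits a hypercyclic vector, via a log-convexity argument for the orbit norms. The obstruction from the previous proposition (that $\varphi_\lambda\in L^{p'}$ forces $T^\ast$ to have eigenvalues) is unavailable here because $\Lambda_2^\circ=\emptyset$, so a different mechanism is needed.

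The identity $\wtilde{Tf}(\lambda,k)=m(\lambda)\wtilde f(\lambda,k)$ from \eqref{multiplier-defn} holds initially for $f\in C_c^\infty(X)$ and extends by $L^2$-boundedness of $T$ and Plancherel to every $f\in L^2(X)$; iterating it yields $\wtilde{T^nf}(\lambda,k)=m(\lambda)^n\wtilde f(\lambda,k)$ a.e.\ for every $n\ge 0$. Setting $a_n:=\|T^nf\|_2^2$, Plancherel gives
\[a_n = C\int_{\mathfrak a^\ast_+\times K/M} |m(\lambda)|^{2n}\,|\wtilde f(\lambda,k)|^2\,|\hc(\lambda)|^{-2}\,d\lambda\,dk.\]
Writing $|m|^{2n}=|m|^{n-1}\cdot|m|^{n+1}$ and applying the Cauchy-Schwarz inequality against the positive measure $|\wtilde f|^2|\hc|^{-2}\,d\lambda\,dk$ gives the log-convexity $a_n^2\le a_{n-1}a_{n+1}$ for all $n\ge 1$.

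Now assume toward a contradiction that $T$ has a hypercyclic vector $f$. Since $L^2(X)$ is infinite-dimensional, density of $\{T^nf\}_{n\ge 0}$ forces both $\sup_n a_n=\infty$ (to approximate vectors of arbitrarily large norm) and $\inf_n a_n=0$ (to approximate $0$). If $a_{n_0}=0$ for some $n_0$, then $T^{n_0}f=0$ and the orbit reduces to the finite set $\{f,Tf,\dots,T^{n_0-1}f,0,0,\dots\}$, which is not dense. Otherwise all $a_n>0$ and $\log a_n$ is a convex sequence, whose forward differences $\log(a_{n+1}/a_n)$ are non-decreasing. Either all these differences are non-positive (then $a_n$ is non-increasing, so $\sup_n a_n=a_0<\infty$) or from some index $n_0$ onward they become positive (then $a_n$ grows at least geometrically for $n\ge n_0$ while $\inf_n a_n=\min\{a_0,\dots,a_{n_0}\}>0$). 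Both alternatives contradict the coexistence of $\inf a_n=0$ and $\sup a_n=\infty$, so $T$ is not hypercyclic. Since every chaotic operator is hypercyclic, $T$ is also not chaotic.

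The main technical point to nail down carefully is the extension of the multiplier identity $\wtilde{T^nf}=m^n\wtilde f$ from $C_c^\infty(X)$ to all $f\in L^2(X)$ and all iterates $n$, which is the standard density-plus-isometry argument using that $CO_2(X)$ is a Banach algebra containing $T^n$ with symbol $m^n$. I note in passing that the nontriviality of $T$ plays no role in this argument: the conclusion holds for every $L^2$-multiplier, including $T=cI$.
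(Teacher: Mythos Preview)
Your proof is correct and takes a genuinely different route from the paper. The paper argues by contradiction from a specific target: if $\phi$ were hypercyclic, some subsequence $T^{n_k}\phi$ would converge to $2\phi$; passing to norms via Plancherel and decomposing the integral over $\{|m|>1\}$, $\{|m|=1\}$, $\{|m|<1\}$, the dominated and monotone convergence theorems force either $\wtilde\phi$ to vanish on $\{|m|>1\}$ (so the orbit is norm-bounded) or that set to be null (so $T$ is a contraction), both incompatible with hypercyclicity.

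Your argument is cleaner and more conceptual: after conjugating by the Helgason transform, $T$ becomes the multiplication operator $M_m$, and Cauchy--Schwarz gives log-convexity $a_n^2\le a_{n-1}a_{n+1}$ of the orbit norms $a_n=\|T^nf\|_2^2$. A log-convex positive sequence cannot satisfy both $\inf a_n=0$ and $\sup a_n=\infty$, which density of the orbit would require. This is essentially the classical proof that normal (in particular, multiplication) operators on a Hilbert space are never hypercyclic, and it bypasses the case analysis over level sets of $|m|$. As you note, nontriviality of $T$ is irrelevant for this argument. One cosmetic point: the clause ``since $L^2(X)$ is infinite-dimensional'' is not what drives $\sup a_n=\infty$ and $\inf a_n=0$; those follow in any nonzero normed space simply because vectors of arbitrary norm exist. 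The rest is airtight.
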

\begin{proof}
Let $m\in L^\infty(\mathfrak a^*_+)$ and the operator $T$ is given by $\wtilde{Tf}(\lambda, k)=m(\lambda)\wtilde{f}(\lambda, k)$. Then $\|T\|_{L^2-L^2}=\|m\|_\infty$. We assume for the sake of meeting a contradiction that $T$ is hypercyclic, equivalently there exists a hypercyclic vector $\phi\in L^2(X)$ for $T$. Then there exits a sequence $\{n_k\}$ of natural numbers such that  $T^{n_k} \phi\to 2\phi$ in $L^2(X)$ as $n_k\to \infty$. For convenience by abuse of notation we write $n_k$ as $n$. We have consequently,
$\|T^n\phi\|_2\to 2\|\phi\|_2$, that is
\[\lim_{n\to \infty}\int_{\mathfrak a_+^\ast\times K/M} |m(\lambda)|^{2n} |\wtilde{\phi}(\lambda, k)|^2 |\hc(\lambda)|^{-2} d\lambda\; dk=4 \int_{\mathfrak a_+^\ast\times K/M} |\wtilde{\phi}(\lambda, k)|^2 |\hc(\lambda)|^{-2} d\lambda\; dk.\] We divide  the integral in the left hand side in three parts and apply dominated convergence theorem to get,
\begin{eqnarray*}\lim_{n\to \infty}\int_{\{\lambda\in \mathfrak a_+^\ast\mid |m(\lambda)|>1\}\times K/M} |m(\lambda)|^{2n} |\wtilde{\phi}(\lambda, k)|^2 |\hc(\lambda)|^{-2} d\lambda\; dk\\
+\int_{\{\lambda\in \mathfrak a_+^\ast\mid |m(\lambda)|=1\}\times K/M} |\wtilde{\phi}(\lambda, k)|^2 |\hc(\lambda)|^{-2} d\lambda\; dk\\
=4 \int_{\mathfrak a_+^\ast\times K/M} |\wtilde{\phi}(\lambda, k)|^2 |\hc(\lambda)|^{-2} d\lambda\; dk.
\end{eqnarray*} Thus,
\[\lim_{n\to \infty}\int_{\{\lambda\in \mathfrak a_+^\ast\mid |m(\lambda)|>1\}\times K/M} |m(\lambda)|^{2n} |\wtilde{\phi}(\lambda, k)|^2 |\hc(\lambda)|^{-2} d\lambda\; dk
\le 4 \int_{\mathfrak a_+^\ast\times K/M} |\wtilde{\phi}(\lambda, k)|^2 |\hc(\lambda)|^{-2} d\lambda\; dk. \] By monotone convergence theorem the left hand side goes to infinity while the right hand side is finite. Hence either $\wtilde{\phi}\equiv 0$ on $\{\lambda\in \mathfrak a_+^\ast\mid |m(\lambda)|>1\}\times K/M$ or the set $\{\lambda\in \mathfrak a_+^\ast\mid |m(\lambda)|>1\}\times K/M$ has measure zero in $\mathfrak a_+^\ast\times K/M$. By Plancherel theorem, in the first case $\|T\phi\|_2\le \|\phi\|_2$, hence $\phi$ is not a hypercyclic vector  and in the second $T$ is a contraction. Both of these conclusions  contradict our assumption.
\end{proof}
\begin{remark}
\begin{enumerate}
\item Following \cite[Theorem 1.2]{Pram-Sar} one can  give a different  proof of the fact that an $L^2$-multiplier cannot be hypercyclic. This is based on the observation that $T$ being a  multiplier preserves the left-$K$-types of a function $\phi$. Thus a mismatch between the $K$-types of the possible hypercyclic vector $\phi$ and the target function $f$, will prevent  the sequence $T^n\phi$   to converge to $f$ in $L^2$.
\item An $L^2$-multiplier can have periodic points. Indeed,  there are nontrivial $L^2$-multipliers which have a dense set of periodic points. For instance for a rank one symmetric space $X$  we define a multiplier $T$ by the following   prescription: $m(\lambda)=1$ for $\lambda\in (0,1)$ and $m(\lambda)=-1$ otherwise. Then for any $f\in L^2(X)$, $T^2 f=f$.
\item It is easy to find nontrivial $L^\infty$-multipliers $T$ such that no constant multiple of $T$ is chaotic. For instance, if $f\in L^1(X)$ is a $K$-biinvariant  function, then $T: g\mapsto g\ast f$ cannot be hypercyclic on $L^\infty(X)$. Indeed for any $\phi\in L^\infty(X)$, $\phi\ast f$ is continuous, hence $T^n\phi$ is a sequence of continuous functions. Since its  uniform limit is a continuous function, it cannot converge to an arbitrary function in $L^\infty(X)$.
\end{enumerate}
\end{remark}

\section{Open Questions}
The results in this article triggers some questions, which we offer to the readers. For the sake of simplicity in this section we shall restrict to rank one symmetric spaces where $\Lambda_p$ defined in (3.1.4) takes a simpler form:
\[\Lambda_p=\{\lambda\in \C\mid |\Im \lambda|\le \gamma_p\rho\},\] where $\rho$ is interpreted as a positive number. However the discussion here is equally valid for arbitrary rank.

1. We choose $p_1, p_2$ such that  $2<p_2<p_1<\infty$.  Then  $CO_{p_1}(X)\subset CO_{p_2}(X)$.  It is possible to construct a linear operator  $T\in CO_{p_1}(X)\subset CO_{p_2}(X)$ which is chaotic on $L^{p_1}(X)$ but not chaotic on $L^{p_2}(X)$.  For instance we can take $T=e^{-t(\Delta-c)}$ where $\Delta$ is the positive Laplace-Beltrami operator, $t>0$ and $c$ is a constant satisfying
\[\frac{4|\pmb\rho|^2}{p_1p_1'}<c<\frac{4|\pmb\rho|^2}{p_2p_2'}\] for $p_1, p_2$ as above. Then $T$ will be chaotic on $L^{p_1}(X)$ but not on $L^{p_2}(X)$. To see that $T$ will be chaotic on $L^{p_1}(X)$ we first note that $T$ is given by the symbol $m(\lambda)=e^{-t((\lambda, \lambda)+|\pmb\rho|^2-c)}$, $\lambda\in \Lambda_{p_1}$ and that $\frac{4|{\pmb{\rho}}|^2}{pp'} = \bigl((i \gamma_p)^2 + 1 \bigr) |{\pmb{\rho}}|^2$. Writing $\lambda=u+iv$ where $|v|<\gamma_{p_1}|\pmb\rho|$ we have $|m(\lambda)|=e^{-t(|u|^2-|v|^2+|\pmb\rho|^2-c)}$. The given condition on $c$ implies \[\gamma_{p_1}|\pmb\rho|>\sqrt{|\pmb\rho|^2-c}>\gamma_{p_2}|\pmb\rho|.\] Taking $u$ sufficiently large we have $|m(\lambda)|<1$ for the corresponding $\lambda$.  On the other hand choosing $u=0$ and $v$ in the range $\gamma_{p_1}|\pmb\rho|>|v|> \sqrt{|\pmb\rho|^2-c}$, we get $|m(\lambda)|>1$. The argument in the proof of Theorem \ref{result-multiplier} now shows that $T$ is chaotic on $L^{p_1}(X)$. It is  clear that such a choice is not possible for $L^{p_2}(X)$. Result in \cite[Theorem 1.3]{Pram-Sar} also shows that  $T$ is neither  hypercyclic nor it has {\em any}  periodic point in   $L^{p_2}(X)$.

The operator $T=e^{-t(\Delta-c)}$ considered here is indeed  a  convolution operator by the $K$-invariant measure $e^{ct} h_t$ on $X$. We have shown in Corollary \ref{result-measure-2} that whenever $T\in CO_{p_1}(X)\subset CO_{p_2}(X)$ is convolution by a nonatomic $K$-invariant nonnegative measure $\mu$ on $X$, then $T$ is chaotic on $L^{p_2}(X)$ implies that it is chaotic on $L^{p_1}(X)$.
We are thus led to ask the following question: Let $T\in CO_{p_1}(X)\subset CO_{p_2}(X)$ where  $2<p_2<p_1<\infty$. Suppose that   $T$ is chaotic (respectively hypercyclic) on $L^{p_2}(X)$. Does it follow that $T$ is chaotic (respectively hypercyclic)  on $L^{p_1}(X)$?

2. In Corollary \ref{result-measure} we have shown  that if $T:f\mapsto f\ast\mu$ is a convolution operator initially defined for $f\in C_c^\infty(X)$, where $\mu$ is a nonatomic $K$-invariant measure on $X$ which satisfies $\what{\mu}(-i\gamma_p\pmb\rho)<\infty$, for some $p\in (2, \infty)$, then $T\in CO_p(X)$ and it is either a contraction (when $\what{\mu}(-i\gamma_p\pmb\rho)\le 1$) or it is chaotic (when $\what{\mu}(i\gamma_p\pmb\rho)> 1$).
This motivates us to ask  the following question: Let $T\in CO_{p}(X)$ for some $2<p<\infty$ be a nontrivial multiplier on $L^p(X)$ which is not a contraction. Is $T$ chaotic?

A related  question motivated by the same (i.e. convolution with noatomic $K$-invariant measure on $X$) is the following:  Let $T:L^p(X)\to L^p(X)$ for some $2<p<\infty$ be a  $L^p$-multiplier given by the symbol $m(\lambda)$. If $|m(\lambda)|\le 1$ on $\Lambda_p^\circ$, then is it true  that $T$ is not hypercyclic?

3.
Let $T\in CO_{p_1}(X)$ be a nontrivial multiplier with symbol $m(\lambda)$ for  some $2<p_1<\infty$. Then $T\in CO_p(X)$ for all $p\in [2, p_1]$. We note  that $|m(\lambda)|$ is nonconstant on any open set of $\Lambda_{p_1}^\circ$. Therefore for any  $\delta>0$ such that $2+\delta<p_1$, $|m(\lambda)|$ is nonconstant on $\Lambda_{2+\delta}$. The argument of the proof of Theorem \ref{result-multiplier} shows that $zT$ is chaotic on $L^p(X)$ for any $p\in [2+\delta, p_1]$ if we can choose two elements $\lambda_1, \lambda_2\in \Lambda_{2+\delta}^\circ$ such that $z\in \C$ satisfies $|m(\lambda_1)|<1/|z|<|m(\lambda_2)|$. This argument however prevents us to make a uniform choice  for the whole range $[2, p_1]$, which can be illustrated through the following example in  a  rank one symmetric space $X$. We define a multiplier operator $T$ by  $m(\lambda)=e^{i/(4\pmb\rho^2+\lambda^2)}$ for $\lambda\in \Lambda_{p_1}$. It can be verified that $T\in CO_{p_1}(X)$ (see \cite{Anker-mult}). Since $|m(\lambda)|=1$ on $\mathfrak a^\ast=\R$, we cannot choose $\lambda_1, \lambda_2$ from $\R$ satisfying $|m(\lambda_1)|<|m(\lambda_2)|$ and proceed as above. Thus the question remains whether  it is possible to find a constant $c>0$ such that  for all $z\in \C$ with $|z|=c$,  $zT$  is chaotic on $L^p(X)$ for all $p\in [2, p_1]$.

\end{document}